\documentclass[a4paper,12pt]{amsart}
\usepackage{amssymb,amsthm,amsmath,color,url}
\usepackage[margin=3cm]{geometry}

\usepackage{graphicx}
\usepackage{stmaryrd}
\usepackage[T1]{fontenc}
\usepackage[english]{babel}
\usepackage{comment}
\usepackage{hyperref}
\usepackage{ifthen}
\hypersetup{
  colorlinks   = true,
  linkcolor = red!70!black,
     citecolor    = green!50!black
}
\usepackage[noabbrev,capitalise]{cleveref}
\crefname{subsection}{Subsection}{Subsections}
\crefname{claim}{Claim}{Claims}
\crefname{problem}{Problem}{Problems}

%reference theorem by name
\makeatletter
\def\namedlabel#1#2{\begingroup
   \def\@currentlabel{#2}%
   \label{#1}\endgroup
}
\makeatother

\usepackage{thmtools}
\usepackage{thm-restate}
\usepackage{enumitem}
\usepackage{caption,subcaption}

%tikz
\usepackage{tikz}
\usetikzlibrary{shadows}
\usetikzlibrary{backgrounds}
\usetikzlibrary{arrows}
\usetikzlibrary{patterns}
\usetikzlibrary{calc}
\usetikzlibrary{shapes}
\usetikzlibrary{positioning}
\usetikzlibrary{math}
\usetikzlibrary{external}
\usetikzlibrary{decorations.pathreplacing}
\tikzexternalize[prefix=figures/]

\declaretheorem[name=Theorem, numberwithin=section]{theorem}
\declaretheorem[name=Lemma, sibling=theorem]{lemma}
\declaretheorem[name=Proposition, sibling=theorem]{proposition}

\declaretheorem[name=Corollary, sibling=theorem]{corollary}

\declaretheorem[name=Claim, sibling=theorem]{claim}
\declaretheorem[name=Claim, numbered=no]{claim*}

\declaretheorem[name=Remark, style=remark, sibling=theorem]{remark}

% for the qed's of claims
\def\cqedsymbol{\ifmmode$\lrcorner$\else{\unskip\nobreak\hfil
\penalty50\hskip1em\null\nobreak\hfil$\lrcorner$
\parfillskip=0pt\finalhyphendemerits=0\endgraf}\fi}

\newcommand{\Cay}{\mathrm{Cay}}

\newcommand{\Aut}{\mathrm{Aut}}

\newcommand{\pRS}{\preceq}

\newcommand{\Sep}{(Y,S,Z)}
\newcommand{\Sepp}{(Y',S',Z')}

\newcommand*\Sepi[1]{(Y_{#1},S_{#1},Z_{#1})}

 \newcommand{\Vint}{V_{\mathrm{int}}}
 \newcommand{\Vout}{V_{\mathrm{ext}}}
  
 \newcommand{\cN}{\mathcal{N}}

 \newcommand{\cE}{\mathcal{E}}
    
 \newcommand{\cC}{\mathcal{C}}

 \newcommand{\Zd}{\mathbb Z_2}

\newcommand{\Stab}{\mathrm{Stab}}

\newcommand*\torso[1]{\llbracket #1 \rrbracket}

\newcommand*\sg[1]{\{ #1 \}}

\DeclareMathOperator{\diam}{diam}

% preventing double pages footnote
\interfootnotelinepenalty=10000
  
% personal macros
 % algorithm
 % 
 % ideals
 % 
 % 
 % class
 % 
 % dominating set
 % hypergraph 1
% \def\H{\mathcal{H}} % hypergraph 2
 % closure system
 % hyperedge
 % lattice

 % meet
 % neighborhoods

 % integers
 % big O notation
 % realize
 % right
 % supertree
 % supertree
 % join
 % universe
 % up
 % family of sets
 % family of X
 % family of 
 % region pointed by a separator

 % pretty arrow
 % 

\let\le\leqslant
\let\ge\geqslant
\let\leq\leqslant
\let\geq\geqslant

% no wide stretch of large operator s.t. "a<b" 
\thickmuskip=5mu plus 1mu minus 2mu

\begin{document}

\title[Structure of quasi-transitive planar graphs]{A note on the structure of locally finite planar quasi-transitive graphs.}

\author[U.~Giocanti]{Ugo Giocanti}
\address[U.~Giocanti]{Theoretical Computer Science Department, Faculty of Mathematics and Computer Science, Jagiellonian University, Krak\'ow,
 Poland}
\email{ugo.giocanti@uj.edu.pl}

\thanks{The author is supported by the National Science Center of Poland
under grant 2022/47/B/ST6/02837 within the OPUS 24 program, and partially supported by the French ANR Project TWIN-WIDTH
(ANR-21-CE48-0014-01).}

\date{}

\begin{abstract}
In an early work from 1896, Maschke   established the complete list of all finite planar Cayley graphs. This result initiated a long line of research over the next century, aiming at characterizing in a similar way all planar infinite Cayley graphs. Droms (2006) proved a structure theorem for finitely generated \emph{planar groups}, i.e., finitely generated groups admitting a planar Cayley graph, in terms of Bass-Serre decompositions. 
As a byproduct of his structure theorem, Droms proved that such groups are finitely presented. More recently, Hamann (2018) gave a graph theoretical proof that every planar quasi-transitive graph $G$ admits a generating $\mathrm{Aut}(G)$-invariant set of closed walks with only finitely many orbits, and showed that a consequence is an alternative proof of Droms' result. Based on the work of Hamann, we show in this note that we can also obtain a general structure theorem for $3$-connected locally finite planar quasi-transitive graphs, namely that every such graph admits a canonical tree-decomposition whose 
edge-separations correspond to cycle-separations in the (unique) embedding of $G$, and in which every part is still quasi-transitive and admits a vertex-accumulation free embedding. This result can be seen as a version of Droms' structure theorem for quasi-transitive planar graphs. As a corollary, we obtain an alternative proof of a result of Hamann, Lehner, Miraftab and R\"uhmann (2022) that every locally finite quasi-transitive planar graph admits a canonical tree-decomposition, whose parts are either $1$-ended or finite planar graphs.
\end{abstract}

\maketitle
  
\section{Introduction}
In his seminal work, Maschke \cite{Maschke}
gave the full list of all finite planar Cayley
graphs. A group admitting a planar Cayley
graph is called a \emph{planar group}, and
Maschke showed in the same paper that the
finite planar groups are exactly the countable
groups of isometries of the $2$-dimensional
sphere $\mathbb S^2$. Based on the works of
Wilkie \cite{Wilkie} and MacBeath
\cite{MacBeath}, Zieschang, Volgt and Coldewey
\cite{Zieschang80} established the complete
list of \emph{planar discontinuous groups},
which are exactly the countable groups for
which there exists a planar Cayley graph with
a \emph{vertex-accumulation-free} planar
embedding\footnote{Note that the basic
definition of planar discontinuous from
Zieschang, Volgt and Coldewey
\cite{Zieschang80} differs from the one we
gave, however it is shown in \cite[Theorems
4.13.11, 6.4.7 and Corrolary 4.13.15]{Zieschang80} that both definitions are
equivalent.}.
We also refer to \cite[Section III.5]{MS83}
for a complementary work on such groups. It is
worth mentioning that every one-ended group is
planar discontinuous,  
hence all the planar groups which do not enter
into the scope of the aforementioned
characterisation are the multi-ended ones,
i.e., the groups whose number of ends is $2$
or $\infty$.

In \cite{Droms}, Droms proved that
finitely generated planar groups are finitely presented, and thus, by a result of Dunwoody \cite{Dunwoody1985}, they are also accessible. In order to do this, he proved a decomposition theorem for such groups, inspired by Stallings' ends theorem  \cite{Sta68}. In group theoretic terms, this result states that every such group admits a finite Bass-Serre decomposition in which all operations involved are special kind of HNN-extensions and free amalgamations, where at each step the parts which are amalgamated correspond to finite subgroups, which are faces of the planar Cayley graphs drawn. 
From a graph theoretic perspective, this theorem intuitively states that every locally finite planar Cayley graph $G=\Cay(\Gamma, S)$ admits a tree-decomposition which is invariant under the action of $\Gamma$, whose parts correspond to planar Cayley graphs of finitely generated subgroups of $\Gamma$, and whose edge-separations correspond to separations induced by cycles in some planar drawing of $G$. 
See \Cref{fig: plan1,fig:plan2} for an illustration of such a decomposition. We show in this note that a decomposition with such properties still exists if we simply assume that $G$ is $3$-connected locally finite quasi-transitive.

\begin{theorem}[Theorem \ref{thm: plan-struct-1}]
\label{thm: intro}
Every planar locally finite $3$-connected
quasi-transitive graph $G$ admits a canonical
tree-decomposition whose edge-separations
correspond to cycle-separations in the
(unique) embedding of $G$, and where every
part is a quasi-transitive subgraph of $G$
admitting a vertex-accumulation-free planar
embedding.
\end{theorem}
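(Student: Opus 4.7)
The plan combines three ingredients: the Whitney--Imrich uniqueness of the embedding for $3$-connected planar graphs, Hamann's $\Aut(G)$-invariant generating set of closed walks with finitely many orbits, and the canonical tree-decomposition machinery built from nested separation systems (Dunwoody--Kr\"on, Carmesin--Diestel--Hundertmark--Stein). Since $G$ is $3$-connected, planar and locally finite, its sphere embedding is unique up to reflection; consequently $\Aut(G)$ permutes both the facial cycles and the cycle-separations of $G$ in a well-defined way. Under $3$-connectivity, the generating closed walks produced by Hamann may moreover be taken to be actual cycles.

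The core of the proof is the construction of a canonical $\Aut(G)$-invariant nested family $\mathcal{N}$ of cycle-separations with only finitely many orbits. Starting from the finite-orbit generating family of cycles provided by Hamann, I would first keep only those cycles whose associated planar separation is non-trivial, i.e.\ both sides contain a vertex outside the cycle. An uncrossing/corner argument, applied in an $\Aut(G)$-equivariant manner, then produces a nested subfamily; quasi-transitivity together with local finiteness ensure that only finitely many orbits survive this process. The tree-decomposition dual to $\mathcal{N}$, produced via the standard duality, is then canonical, and its edge-separations coincide with cycle-separations by construction.

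It remains to verify that each part is quasi-transitive and admits a vertex-accumulation-free planar embedding. Quasi-transitivity is immediate from $\Aut(G)$-invariance: parts fall into finitely many orbits, and the setwise stabilizer of each part acts on it with finitely many orbits. For the vertex-accumulation-free property I would argue by contradiction: a vertex-accumulation point in the inherited embedding of a torso witnesses an end of the torso accumulating in a face, and combining this end with Hamann's finite-orbit generating cycles should yield a cycle-separation that strictly refines $\mathcal{N}$ inside the torso, contradicting the maximality of $\mathcal{N}$.

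The main obstacle is precisely this last implication: converting a hypothetical vertex-accumulation point of a torso into an actual refining cycle-separation of $G$ that remains nested with $\mathcal{N}$. One must carefully distinguish the ends of $G$ from the ends of its individual parts, ensure that the extracted cycle lies genuinely inside the torso, and check nestedness with everything already chosen in $\mathcal{N}$; this is where the precise form of Hamann's theorem---not just the existence of a generating set of closed walks but its finite-orbit equivariance combined with $3$-connectivity---is expected to play the essential role.
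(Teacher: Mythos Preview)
Your overall architecture---build an $\Aut(G)$-invariant nested family of cycle-separations with finitely many orbits, pass to the dual tree-decomposition, then verify the VAP-free property on the parts---matches the paper. However, two steps in your outline do not go through as written, and the second one is a genuine gap.

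First, your route to ``nested $+$ finitely many orbits'' via an equivariant uncrossing of Hamann's finite-orbit family is unnecessary and would be delicate to carry out. The paper instead observes that Hamann proves two separate statements: (a) for each $i$ there is an $\Aut(G)$-invariant \emph{nested} family of cycles of length at most $i$ generating $\cC_i(G)$, and (b) there is an $\Aut(G)$-invariant generating family with finitely many orbits. Taking $i$ to be the maximal length occurring in (b), the nested family from (a) already generates all of $\cC(G)$, and since $G$ is locally finite quasi-transitive there are only finitely many orbits of cycles of bounded length. No uncrossing is needed.

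Second, and more seriously, your VAP-free argument is not a proof. You propose to derive from a hypothetical accumulation point of a part a cycle-separation ``refining $\mathcal N$'' and then invoke ``maximality of $\mathcal N$''. But $\mathcal N$ was never chosen to be maximal; it was chosen to \emph{generate the cycle space}. There is no maximality to contradict, and it is unclear how an accumulation point in the induced embedding of a part would canonically produce a cycle of $G$ nested with every member of $\mathcal N$. The paper's argument is entirely different and uses the generating property in an essential way: it proves a lemma showing that for each $\mathcal E$-block $X$, the subfamily $\mathcal E_X=\{C\in\mathcal E: V(C)\subseteq X\}$ still generates $\cC(G[X])$. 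Since $X$ is a block, every cycle of $\mathcal E_X$ is facial in $G[X]$; hence $G[X]$ is a $2$-connected plane graph whose cycle space is generated by facial cycles, and Thomassen's criterion (\emph{Planarity and duality of finite and infinite graphs}, Theorem~7.4) then gives VAP-freeness directly. The substantive content lies in that transfer lemma, which requires a careful minimality argument on $\mathbb Z_2$-decompositions of a cycle of $G[X]$; your outline has no analogue of this step.
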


From a metric perspective, quasi-transitivity is known to be more general than than the property of being a Cayley graph: in \cite{EFW12}, the authors exhibited a construction of a quasi-transitive graph which is not quasi-isometric to any Cayley graph, answering an initial question of Woess \cite{Woe91}. However, MacManus recently proved that this is not true anymore if we restrict to the class of quasi-transitive graphs which are quasi-isometric to some planar graph \cite{Mac24Planar}; namely, every locally finite quasi-transitive graph which is quasi-isometric to a planar graph is quasi-isometric to some planar Cayley graph. In light of this result, it is then not surprising that planar quasi-transitive graphs should satisfy similar properties as planar Cayley graphs. 
Nevertheless, the proof we give here offers the advantage to be based on purely graph-theoretic arguments, building on the papers \cite{HamannCycle, HamannPlanar}.
Combining Theorem \ref{thm: intro} with Tutte's canonical decomposition of $2$-connected graphs, we obtain as a corollary the following, which was already proved in \cite[Theorem 7.6]{HamannStallings22}, using the machinery of tree-amalgamations. 
 
\begin{corollary}[Corollary \ref{cor: plan-general}]
 \label{cor: intro}
 Let $G$ be a locally finite quasi-transitive planar graph. Then $G$ admits a canonical tree-decomposition of bounded finite adhesion, whose parts are quasi-transitive planar graphs with at most one end.
\end{corollary}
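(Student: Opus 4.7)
The plan is to build the desired tree-decomposition by iteratively composing three canonical decompositions of $G$, each of which is $\Aut(G)$-invariant, of bounded finite adhesion, and produces parts that remain locally finite, planar, and quasi-transitive. A basic fact about canonical tree-decompositions is that their iterated composition (refining each part by applying a further canonical tree-decomposition) is again a canonical tree-decomposition, and its adhesion is bounded by the maximum of the adhesions at each level; so this strategy automatically yields a canonical tree-decomposition of bounded adhesion provided each intermediate output satisfies the hypotheses needed for the next step.

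Concretely, first I would apply the block-cutvertex tree-decomposition of $G$, which is canonical and of adhesion $1$; each block inherits local finiteness, planarity, and quasi-transitivity (under its setwise stabilizer in $\Aut(G)$), and the blocks split into finitely many orbits. Second, to each $2$-connected block I would apply Tutte's canonical tree-decomposition into $3$-connected components, which is canonical and of adhesion $2$; the parts are either $3$-connected graphs, (finite) cycles, or bonds, the latter two being finite. Third, to each $3$-connected part I would apply \Cref{thm: intro}, yielding a canonical tree-decomposition whose edge-separations are cycle-separations (so of adhesion bounded in terms of the finitely many orbits of separating cycles) and whose parts are quasi-transitive planar subgraphs admitting vertex-accumulation-free planar embeddings.

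It remains to verify that each final part of the combined decomposition has at most one end. Parts that are finite cycles or bonds are trivially $0$-ended. For parts coming from \Cref{thm: intro}, I would invoke the standard fact that a locally finite graph admitting a vertex-accumulation-free planar embedding in $\mathbb{S}^2$ has at most one end: any two distinct ends would produce two distinct accumulation points of the vertex set in $\mathbb{S}^2$, contradicting the embedding. The main obstacle in making the argument rigorous is the bookkeeping across the three refinements — one must check that quasi-transitivity descends to each part, that the $\Aut(G)$-actions on the successive decompositions assemble coherently into a single canonical tree-decomposition of $G$, and that the adhesion stays uniformly bounded. This is largely routine once one observes that each step respects setwise stabilizers and produces only finitely many orbits of parts and of adhesion sets.
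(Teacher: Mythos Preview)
Your overall strategy is close to the paper's, but there is a genuine gap at the final step. The claim that a locally finite graph with a vertex-accumulation-free embedding has at most one end is simply false: the bi-infinite path $\mathbb{Z}$, embedded as the integer points on the $x$-axis, is VAP-free yet has two ends. More generally, distinct ends need not produce distinct accumulation points in $\mathbb{S}^2$; they can all accumulate at the single point at infinity. So applying \Cref{thm: intro} and stopping there does \emph{not} yield parts with at most one end.

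The paper handles exactly this issue with an additional layer of decomposition (packaged as \Cref{cor: plan-3conn}). After obtaining VAP-free quasi-transitive parts from \Cref{thm: plan-struct-1}, it invokes \Cref{prop: VAP-free}: a connected locally finite quasi-transitive VAP-free graph with at least two ends has bounded treewidth. Such a part is then further refined via \Cref{thm: tw-ends}$(ii)'$ into a canonical tree-decomposition with finite bags, and these refinements are assembled with the rest using the machinery of \cite{CTTD}. Your argument is repairable along these lines, but as written it is incomplete.

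A secondary point: in Tutte's decomposition it is the \emph{torsos} that are $3$-connected, not the induced parts $G[V_t]$, so you cannot directly feed a part into \Cref{thm: intro}. The paper deals with this by passing to the supergraph $G^+$ obtained by adding the adhesion edges, checking that $G^+$ remains planar and locally finite, and arguing (Claim~\ref{clm: torsos-plan}) that a good decomposition of $G^+$ yields one for $G$. You have flagged this kind of bookkeeping as ``routine'', but the torso-versus-part distinction is precisely where it bites.
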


\begin{figure}[htb] 
  \centering 
  \includegraphics[scale=0.5]{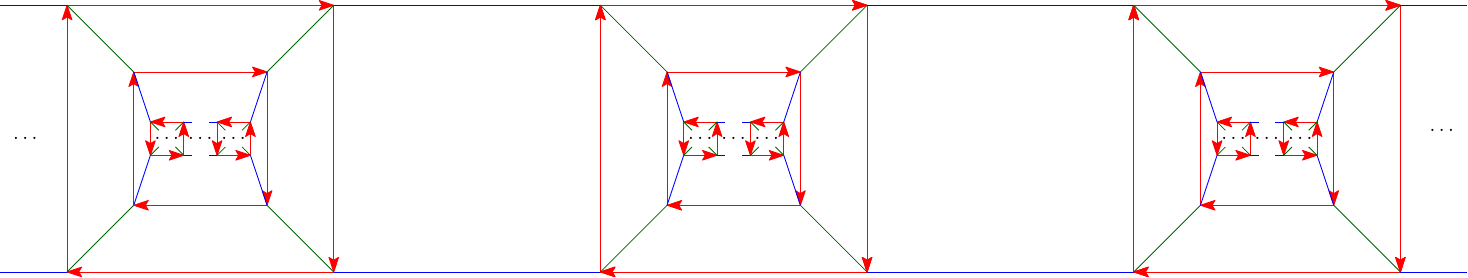}
  \caption{A section of the planar Cayley graph of the group $\langle a,b,c \mid a^4, b^2, c^2, abab, acac\rangle$. The edges associated to $a,b$ and $c$ are respectively colored in red, blue and green. 
  This planar drawing is obtained by drawing a first bi-infinite ladder that corresponds to the Cayley graph of the subgroup generated by $a$ and $b$, and then, in each square face delimited by a cycle $C$ labeled by $a^4$, we draw a copy of the cube, which correponds to the Cayley graph of the subgroup generated by $a$ and $c$, and which we attach along $C$. We repeat this construction an infinite number of steps by alternatively drawing a new ladder or a cube in each new facial cycle delimited by a square labeled by $a^4$.
  The obtained graph then admits a (canonical) tree-decomposition, whose decomposition tree $T$ is the barycentric subdivision of the regular tree with infinite countable degree $\omega$. Each bag corresponding to a node of infinite degree in $T$ contains a bi-infinite ladder, and each bag corresponding to a node of degree $2$ contains a cube. The adhesion sets of this tree-decomposition are cycles of size $4$.}
  \label{fig: plan1}
\end{figure}

\begin{figure}[htb] 
  \centering
  \includegraphics[scale=0.5]{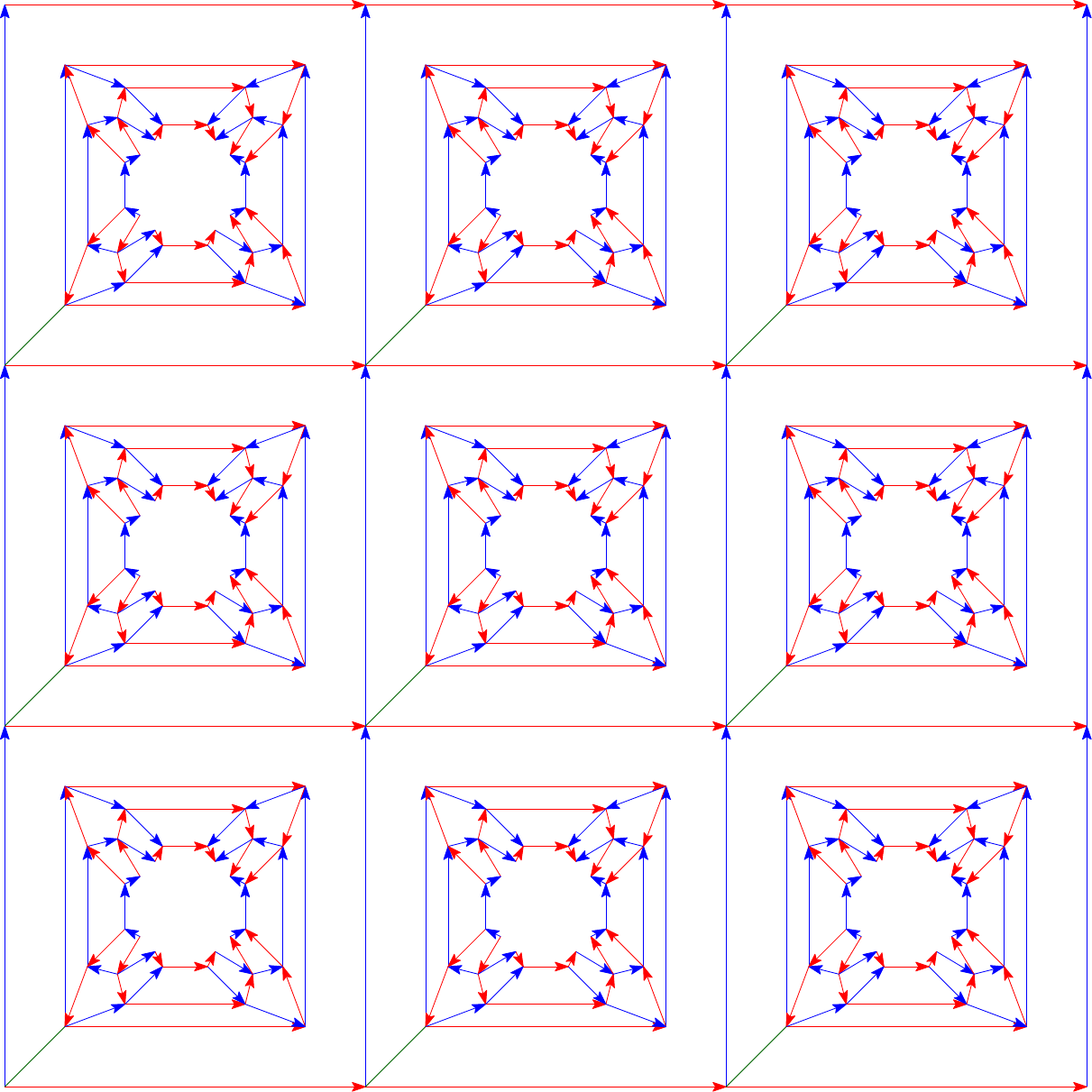}
  \caption{A section of the planar Cayley graph of the group $\langle a,b,c \mid aba^{-1}b^{-1}, c^2\rangle$. This planar drawing is obtained by drawing a first infinite square grid, and then, in each square face, we draw another infinite square grid with a square as outer face, and connect it to the face of the initial grid with an edge. We then keep going on drawing in a similar way a new grid in every square face in the new drawing, and repeat this operation an infinite number of time. A canonical tree-decomposition of the obtained graph satisfying the properties of Corollary \ref{cor: intro} is the one with decomposition tree $T$ the barycentric subdivision of the regular tree with infinite countable degree $\omega$, and where each bag associated to a node of infinite degree contains a copy of the infinite grid, while every bag associated to a node of degree $2$ contains a copy of $K_2$.}
  \label{fig:plan2}
\end{figure}

\paragraph{\textbf{Related work.}} 
A huge amount of work has been done related to the structure of planar quasi-transitive and Cayley graphs. We refer to \cite{Babai97, Renault, Mohar06, Dunwoody07, Georgakopoulos14, GH15, GeoPlanar2conn, GeoCubic, Georgakopoulos_Kleinian, MS22, GH22} for more details on the topic.
    
\section{Preliminaries}
\label{sec: Prel}

\subsection{Quasi-transitive graphs and quasi-isometries}

Let $\Gamma$ be a group acting on a graph $G$ (by automorphisms). 
For every
subset $X\subseteq V(G)$, we let $\Stab_{\Gamma}(X):=\sg{\gamma\in \Gamma:
\gamma\cdot X=X}$ denote the stabilizer of $X$, and for each $x\in X$, we let
$\Gamma_x:=\Stab_{\Gamma}(\sg{x}).$

The action of $\Gamma$ on $G$ is called
\emph{quasi-transitive} if there is only a finite number of  orbits in
$V(G)/\Gamma$. We say that $G$ is 
\emph{quasi-transitive} if it admits a quasi-transitive group action.

For every two graphs $G$ and $H$, a \emph{quasi-isometry} is a map $f: V(G) \rightarrow V(H)$ for which there exists some 
constants $\varepsilon\ge 0$, $\lambda\ge 1$, and $C\ge 0$ such that
(i) for any $y\in V(H)$ there is $x\in V(G)$ such that $d_H(y,f(x))\le C$,
and (ii) for every $x,x'\in V(G)$, $$\frac1{\lambda}d_G(x,x')-\varepsilon\le d_H(f(x),f(x'))\le \lambda d_G(x,x')+\varepsilon.$$
If there exists a quasi-isometry between $G$ and $H$, then we say that $G$ and $H$ are \emph{quasi-isometric} to each other.

\subsection{Separations}

A \emph{separation} in a graph $G=(V,E)$ is a triple $\Sep$
such that $Y,S,Z$ are pairwise-disjoint subsets of $V(G)$, $V=Y\cup S\cup Z$ and there is no edge
between vertices of $Y$ and $Z$. A separation $\Sep$ is \emph{proper}
if $Y$ and $Z$ are nonempty. In this case, $S$ is a \emph{separator} of
$G$. The \emph{order} of a separation $\Sep$ is $|S|$.

The separation $\Sep$ is said to be
\emph{tight} if there are some components $C_Y,C_Z$ respectively of $G[Y],G[Z]$
such that $N_G(C_Y)=N_G(C_Z)=S$. 

The following lemma was originally stated in \cite{TW} for transitive graphs, and the same proof immediately implies that the result also holds for quasi-transitive graphs.
\begin{lemma}[Proposition 4.2 and Corollary 4.3 in \cite{TW}]
\label{lem: TWcut}
 Let $G$ be a locally finite graph. Then for every $v\in V(G)$ and $k\geq 1$, there is 
 only a finite number of tight separations $\Sep$ of order $k$ in $G$ such that $v\in S$.
 Moreover, for any group $\Gamma$  acting
 quasi-transitively on $G$ and  any $k\geq 1$, there is only a
 finite number of $\Gamma$-orbits of tight separations of order at
 most $k$ in $G$. 
\end{lemma}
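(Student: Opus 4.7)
The proof consists of two parts: the first bounds the number of tight $k$-separations with a fixed vertex $v$ in their separator, and the second deduces the orbit-finite statement from the first by pigeonholing on the $\Gamma$-orbits of $V(G)$.

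For the first part, the initial reduction is that once the separator $S$ is fixed, the separation $\Sep$ is determined by a partition of the components of $G\setminus S$ into the two sides $Y$ and $Z$. Since $G$ is locally finite and connected and $S$ is finite, $G\setminus S$ has only finitely many components, so there are only finitely many such partitions. Hence it suffices to bound the number of $k$-element separators $S\ni v$ for which at least two components of $G\setminus S$ have full neighborhood $S$. To show this, I would argue by contradiction: suppose infinitely many such separators $S_1,S_2,\ldots$ exist. Local finiteness implies that only finitely many fit inside any ball $B_R(v)$, so up to extraction one may assume $\max_{s\in S_i}d_G(v,s)\to\infty$. After further pigeonholing on the finite set $N(v)$, one may additionally assume that a fixed neighbor $y$ of $v$ lies in the witness component $C_{Y,i}$ and a fixed neighbor $z$ lies in $C_{Z,i}$ for every $i$. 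Tightness yields, for each $s\in S_i$, a path in $C_{Y,i}$ from $y$ to a neighbor of $s$; as $i$ grows these paths become arbitrarily long. Applying K\"onig's lemma to the tree of such path-prefixes at $y$ produces an infinite ray which, by a diagonal extraction, lies in $C_{Y,i_n}$ for an unbounded sequence $i_n$, contradicting the tightness constraint $N_G(C_{Y,i_n})=S_{i_n}$ with $S_{i_n}$ escaping to infinity.

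For the second part, let $v_1,\ldots,v_r$ be representatives of the $r$ orbits of $\Gamma$ on $V(G)$. Any tight separation of order $k'\leq k$ has nonempty separator (otherwise tightness would force $C_Y=C_Z=V(G)$, contradicting $Y\cap Z=\emptyset$), so some element of its separator belongs to some orbit $\Gamma\cdot v_i$. Translating by a suitable $\gamma\in\Gamma$ produces an equivalent tight separation whose separator contains $v_i$, and the first part bounds the number of such representatives by a finite constant depending only on $v_i$ and $k'$. Summing over $i\in\{1,\ldots,r\}$ and $k'\leq k$ bounds the number of $\Gamma$-orbits.

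The genuinely delicate step is the compactness argument in the first part. Tight separators of order $k$ need not be minimum $y$-$z$ vertex cuts, which prevents invoking the standard Menger-lattice finiteness for minimum cuts. One must instead directly exploit the tightness relation $N_G(C_Y)=N_G(C_Z)=S$ through a K\"onig-style construction, which is the source of the technical work behind the statement.
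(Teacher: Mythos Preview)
The paper does not give its own proof of this lemma; it is quoted from Thomassen--Woess, so there is no argument to compare against directly. Your second part (deducing orbit-finiteness from the first part by pigeonholing on the finitely many vertex orbits) is correct and is exactly how the corollary is derived. Your reductions in the first part---fixing the separator $S$ first, then pigeonholing on $N(v)$ to pin down neighbours $y\in C_{Y,i}$ and $z\in C_{Z,i}$---are also fine and standard.

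The gap is in the K\"onig step. What K\"onig's lemma actually gives you is a ray $R$ from $y$ each of whose finite prefixes coincides with a prefix of \emph{some} $P_i\subseteq C_{Y,i}$; it does not give a single subsequence $(i_n)$ with the entire ray contained in $C_{Y,i_n}$. More importantly, even if by a more careful extraction you did obtain $R\subseteq C_{Y,i_n}$ for infinitely many $n$, this is not a contradiction: the witness components $C_{Y,i_n}$ are allowed to be infinite and to contain rays, and the constraint $N_G(C_{Y,i_n})=S_{i_n}$ says nothing that forbids this. The phrase ``$S_{i_n}$ escaping to infinity'' does not interact with the ray in any way that forces a clash, since $R$ also goes to infinity and may simply stay on the $Y$-side forever. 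So as written the argument does not close.

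The route that actually works (and is the one in Thomassen--Woess) avoids K\"onig entirely. After your pigeonholing, each $S_i$ is a \emph{minimal} $y$--$z$ separator of size $k$: every $s\in S_i$ has neighbours in both witness components, so an $y$--$z$ path through $s$ meets $S_i$ only in $s$. One then proves by induction on $k$ that a locally finite connected graph has only finitely many minimal $y$--$z$ separators of size at most $k$: fix one $y$--$z$ path $P$; every such separator meets $P$, so some $p\in P$ lies in infinitely many of them; for each such $S$, the set $S\setminus\{p\}$ is a minimal $y$--$z$ separator of size at most $k-1$ in $G-p$, and the induction hypothesis finishes. This replaces your compactness attempt by a finite path-hitting argument, which is where the finiteness genuinely comes from.
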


\subsection{Canonical tree-decompositions}

A \emph{tree-decomposition} of a graph $G$ is a pair $(T,\mathcal V)$ where $T$ is a tree and $\mathcal V=(V_t)_{t\in V(T)}$ is a family of subsets $V_t$ of $V(G)$ such that:
\begin{itemize}
 \item $V(G)=\bigcup_{t\in V(T)}V_t$;
 \item for every nodes $t,t',t''$ such that $t'$ is on the unique path of $T$ from $t$ to $t''$, $V_t\cap V_{t''}\subseteq V_{t'}$; 
 \item every edge $e\in E(G)$ is contained in an induced subgraph $G[V_t]$ for some $t\in V(T)$. 
\end{itemize}

\medskip

The sets $V_t$ for every $t\in V(T)$ are called the \emph{bags} of $(T,\mathcal V)$, and the induced subgraphs $G[V_t]$ the \emph{parts} of $(T,\mathcal V)$. The \emph{width} of $(T,\mathcal V)$ is the supremum of $|V_t|-1$ (possibly infinite), for $t\in V(T)$.
The sets $V_t\cap V_{t'}$ for every $tt'\in E(T)$ are called the \emph{adhesion sets} of $(T,\mathcal V)$ and the \emph{adhesion} of $(T,\mathcal V)$ is the supremum of the sizes of its adhesion sets (possibly infinite). 
The \emph{treewidth} of a graph $G$ is the infimum of the width of $(T,\mathcal V)$, 
among all tree-decompositions $(T,\mathcal V)$ of $G$.

The \emph{torsos} of $(T,\mathcal V)$ are the graphs $G\torso{V_t}$ for $t\in V(T)$, with vertex set $V_t$ and edge set 
$E(G[V_t])$ together with the edges $xy$ such that $x$ and $y$ belong
to a common adhesion set of $(T,\mathcal V)$.

Let $A$ be the set of all the orientations of the
edges of $E(T)$, i.e.\ $A$ contains the pairs $(t_1,t_2)$, $(t_2,t_1)$
for every edge $t_1t_2$ of $T$. For an arbitrary pair $(t_1,t_2)\in
A$, and for each $i\in \{1,2\}$, let $T_i$ denote the component of
$T-\{t_1t_2\}$ containing $t_i$. Then the \emph{edge-separation} of
$G$ associated to $(t_1,t_2)$ is $(Y_1,S,Y_2)$ with $S:= V_{t_1}\cap
V_{t_2}$ and $Y_i:=
\bigcup_{s\in V(T_i)} V_s\setminus S$ for $i\in \sg{1,2}$.

\medskip

For group $\Gamma$ acting on $G$,  we say that a
tree-decomposition $(T,\mathcal V)$ is \emph{canonical with respect to
  $\Gamma$}, or simply \emph{$\Gamma$-canonical}, if $\Gamma$ induces a group action on $T$ such that for every $\gamma
\in \Gamma$ and $t\in V(T)$, $\gamma\cdot V_t= V_{\gamma\cdot t
}$. 
In particular, for every $\gamma\in \Gamma$, note that 
$\gamma$ sends bags of $(T,\mathcal V)$ to bags, and
adhesion sets to adhesion sets. When $(T,\mathcal V)$ is
$\Aut(G)$-canonical, we simply say that it is \emph{canonical}.

\medskip

If $\Gamma$ acts on $G$ and $\mathcal N$ is a family of separations of $G$, we say that
$\mathcal N$ is \emph{$\Gamma$-invariant} if for every $\Sep\in
\mathcal N$ and $
\gamma\in \Gamma$, we have $\gamma \cdot\Sep\in \mathcal N$.
Note that if $(T,\mathcal V)$ is $\Gamma$-canonical, then the associated set of edge-separations is $\Gamma$-invariant.

\smallskip

\begin{remark}
\label{rem: nbaretes}
 If $(T,\mathcal V)$ is a $\Gamma$-canonical tree-decomposition of a locally finite graph $G$ on which $\Gamma$ acts quasi-transitively, whose edge-separations are tight, with finite bounded order, then by Lemma \ref{lem: TWcut} the action of $\Gamma$ on $E(T)$ must induce a finite number of orbits. In particular, $\Gamma$ must also act quasi-transitively on $V(T)$.
\end{remark}

The following two lemmas are folklore results about canonical tree-decompositions. 

\begin{lemma}
 \label{lem: torso-QI}
 Let $G$ be a locally finite $\Gamma$-quasi-transitive graph and $(T, (V_t)_{t\in V(T)})$
 \break be a $\Gamma$-canonical tree-decomposition of $G$ with finite adhesion whose parts are connected subgraphs of $G$ and such that $E(T)$ admits only finitely many $\Gamma$ orbits. Then for every $t\in V(T)$, $G\torso{V_t}$ is quasi-isometric to $G[V_t]$. Moreover, the constants $\varepsilon, \lambda, C$ of the quasi-isometries can be chosen independently of $t\in V(T)$.
\end{lemma}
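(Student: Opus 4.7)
The plan is to show that the identity map on $V_t$ is itself a quasi-isometry from $G[V_t]$ to $G\torso{V_t}$, with constants depending only on $G$, $\Gamma$ and $(T,\mathcal V)$. Since the two graphs share the same vertex set, the map is trivially surjective, so the coarseness constant can be taken to be $C=0$, and the inclusion $E(G[V_t])\subseteq E(G\torso{V_t})$ immediately gives $d_{G\torso{V_t}}(x,y)\le d_{G[V_t]}(x,y)$ for all $x,y\in V_t$. Everything therefore reduces to bounding $d_{G[V_t]}$ from above by a constant multiple of $d_{G\torso{V_t}}$, uniformly in $t$.

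The crucial step is to produce a single integer $D$ such that for every edge $tt'\in E(T)$, any two vertices of the adhesion set $V_t\cap V_{t'}$ are at distance at most $D$ in $G[V_t]$. This is where every hypothesis is used: finite adhesion makes $V_t\cap V_{t'}$ a finite set; connectedness of the parts ensures that any two of its vertices are joined by some path in $G[V_t]$; and the assumption that $E(T)$ has only finitely many $\Gamma$-orbits yields finitely many $\Gamma$-orbits on the set $A$ of oriented edges, hence finitely many orbits of pairs (part, adhesion set inside it). Since each $\gamma\in\Gamma$ restricts to an isometry $G[V_t]\to G[V_{\gamma\cdot t}]$ sending adhesion sets to adhesion sets, the relevant distances are constant along $\Gamma$-orbits, and taking the maximum over one representative of each orbit gives the desired $D$.

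Granted $D$, the reverse inequality is obtained from any shortest walk in $G\torso{V_t}$ from $x$ to $y$: edges already present in $G[V_t]$ contribute $1$ to the distance in $G[V_t]$, and each torso edge joining two vertices of a common adhesion set can be replaced by a path in $G[V_t]$ of length at most $D$. This yields $d_{G[V_t]}(x,y)\le D\cdot d_{G\torso{V_t}}(x,y)$, and combining with the inequality of the first paragraph shows that $\mathrm{id}$ is a quasi-isometry with constants $\lambda=D$, $\varepsilon=0$ and $C=0$, all depending only on the data of the lemma and not on $t$.

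I expect the main obstacle to be the orbit bookkeeping leading to $D$: a single edge $tt'\in E(T)$ produces \emph{two} pairs (part, adhesion set) to analyze, namely $(G[V_t], V_t\cap V_{t'})$ and $(G[V_{t'}], V_t\cap V_{t'})$, which is precisely why it is convenient to work with the oriented set $A$ introduced in the preliminaries rather than with $E(T)$ itself. Once the observation that finitely many $\Gamma$-orbits on $E(T)$ implies finitely many orbits on $A$ is recorded, the uniformity of the quasi-isometry constants in $t$ drops out automatically.
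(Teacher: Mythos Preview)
Your proof is correct and follows essentially the same route as the paper's: both show that the identity on $V_t$ is a quasi-isometry, use the trivial inequality $d_{G\torso{V_t}}\le d_{G[V_t]}$, and then bound $d_{G[V_t]}$ by $D\cdot d_{G\torso{V_t}}$ via a uniform bound $D$ on the $G[V_t]$-diameter of adhesion sets obtained from the finiteness of $\Gamma$-orbits on $E(T)$. Your bookkeeping via oriented edges of $T$ is in fact slightly more explicit than the paper's two-step argument (first bounding $C_t$ for fixed $t$, then taking a maximum over $t\in V(T)$), but the content is identical.
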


\begin{proof}
 
 We will show that the identity on $V_t$ induces a quasi-isometry between $G[V_t]$ and $G\torso{V_t}$.
 Let $t\in V(T)$. As $G[V_t]$ is a subgraph of $G\torso{V_t}$, for every $u,v\in V_t$ we have $d_{G\torso{V_t}}(u,v)\leq d_{G[V_t]}(u,v)$. Moreover, 
 as $E(T)$ has finitely many $\Gamma$-orbits and as each part is connected, the set $\sg{d_{G[V_t]}(u,v), \exists s\in N_T(t), u,v\in V_s\cap V_t}$ of values admits a maximum $C_t$. As $V(T)$ has finitely many $\Gamma$-orbits, the set $\sg{C_t: t\in V(T)}$ of values also admits a maximum $C\in \mathbb N$.
 In particular we have $d_{G[V_t]}(u,v)\leq C\cdot d_{G\torso{V_t}}(u,v)$.
\end{proof}

\begin{lemma}
 \label{lem: connected-parts}
  Let $G$ be a connected locally finite $\Gamma$-quasi-transitive graph and 
  \break$(T,(V_t)_{t\in V(T)})$ be a $\Gamma$-canonical tree-decomposition of $G$ with finite adhesion, such that $E(T)$ admits only finitely many $\Gamma$ orbits.
  Then there exists a $\Gamma$-canonical  
  tree-decomposition $(T,(V'_t)_{t\in V(T)})$ of $G$ with finite adhesion, with the same and $\Gamma$-action of $\Gamma$ on $T$, and such that for each $t\in V(T)$, $G[V'_t]$ is connected and quasi-isometric to $G\torso{V_t}$.
\end{lemma}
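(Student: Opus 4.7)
The plan is to enlarge each bag $V_t$ into $V'_t$ by adding, in a $\Gamma$-canonical way, the vertex sets of finitely many paths of $G$ that realise the torso-edges, and then closing under the subtree-of-$T$ axiom.

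First, I would observe that each torso $G\torso{V_t}$ is connected. Given $u,v \in V_t$, take any $u$-$v$ path $P$ in $G$; whenever $P$ leaves $V_t$, the whole detour has all its internal vertices in a single connected component of $T - \{t\}$ (because consecutive detour-vertices share a bag of $T$ avoiding $t$), so the detour re-enters $V_t$ through the same adhesion set $S = V_t \cap V_s$ through which it left, and can be shortcut by a torso-edge inside the clique on $S$. This produces a walk in $G\torso{V_t}$ from $u$ to $v$. Next I would canonically select a family of realising paths: by \Cref{rem: nbaretes}, $\Gamma$ acts on $E(T)$ with finitely many orbits, hence there are finitely many $\Gamma$-orbits of adhesion sets; since each adhesion set is finite, there are only finitely many $\Gamma$-orbits of ordered pairs $(u,v)$ lying in a common adhesion set. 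Picking one finite $u$-$v$ path $P_{uv}$ in $G$ per orbit and extending by $\Gamma$-equivariance yields the whole family, whose members have length bounded by some common constant $L$.

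I would then define $V'_t$ by adding to $V_t$ the vertex sets of all paths $P_{uv}$ whose endpoints lie in a common adhesion set incident to $t$, and closing under the tree-decomposition axiom, i.e.\ propagating each newly added vertex $w$ to every bag on the $T$-path joining two bags already containing $w$. By construction the resulting tree-decomposition is $\Gamma$-canonical and keeps the same $\Gamma$-action on $T$; each bag grows only by finitely many path-vertices per orbit of incident edges, since $|V(P_{uv})| \leq L$ and the propagation distance in $T$ is controlled by $L$ together with the bounded adhesion. It then follows that the adhesion $V'_t \cap V'_{t'}$ remains finite and uniformly bounded, that $G[V'_t]$ is connected (since $G\torso{V_t}$ is connected and every torso-edge is realised by a $G$-path inside $V'_t$), and that the identity on $V_t$ extends to a quasi-isometry between $G[V'_t]$ and $G\torso{V_t}$, because every torso-edge corresponds to a $G$-path of length at most $L$ in $V'_t$.

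The main technical obstacle I expect is controlling the adhesion after closure: at a node $t$ whose stabiliser has a single $\Gamma_t$-orbit containing infinitely many incident edges, one needs to ensure that path-vertices coming from different adhesion sets do not accumulate inside a given $V'_t \cap V'_{t'}$. This should follow from the uniform bound $L$, the local finiteness of $G$ and the quasi-transitivity of the action via \Cref{rem: nbaretes}, but the bookkeeping requires care.
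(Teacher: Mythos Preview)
Your approach is workable in spirit but takes a considerably more complicated route than the paper does, and accumulates technical debts along the way. The paper's proof is a one-line trick: since $E(T)$ has finitely many $\Gamma$-orbits and the adhesion is finite, there is a uniform bound $k$ on $d_G(u,v)$ over all torso-edges $uv$, and one simply sets $V'_t := B_k(V_t)$, the $k$-ball in $G$ around $V_t$. This is manifestly $\Gamma$-canonical, automatically satisfies the subtree axiom (balls of a fixed radius around bags of a tree-decomposition again form a tree-decomposition), has connected parts (any torso-edge is realised by a geodesic of length $\le k$, all of whose vertices lie in the ball), and has finite adhesion because $B_k(V_t)\cap B_k(V_{t'}) \subseteq B_k(V_t\cap V_{t'})$ (any vertex close to both sides of the edge-separation must be close to the separator), which is finite by local finiteness. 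The quasi-isometry is then any nearest-point projection $\pi:V'_t\to V_t$.

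Your path-selection scheme, by contrast, has three loose ends. First, ``pick one path per orbit and extend by $\Gamma$-equivariance'' is not well-defined when the stabiliser of the ordered pair $(u,v)$ is nontrivial: different group elements carrying the representative to $(u,v)$ may yield different paths, so you must take the whole $\Gamma$-orbit of your chosen path (still finitely many through each pair, by local finiteness). Second, the closure step may insert a vertex $w$ into $V'_t$ purely by propagation, with no guarantee that any neighbour of $w$ in $G$ lies in $V'_t$; so connectedness of $G[V'_t]$ after closure is not automatic and needs a separate argument. Third, the adhesion bound after closure --- which you correctly flag --- also requires real work. All three issues evaporate with the ball construction; it is worth knowing this shortcut.
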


\begin{proof}
 As $(T, (V_t)_{t\in (T)})$ has finite adhesion and as $E(T)$ has finitely many $\Gamma$-orbits, note that the set $\sg{d_G(u,v): \exists t\in V(T), uv\in E(G\torso{V_t})\setminus E(G)}$ is bounded and thus admits a maximum, say $k\in \mathbb N$. We let $\mathcal V':=(V'_t)_{t\in V(T)}$ be defined by $V'_t:=B_k(V_t)=\sg{v\in V(G): \exists u\in V_t, d_G(u,v)\leq k}$. It is not hard to check that $(T,\mathcal V')$ is also a $\Gamma$-canonical tree-decomposition of $G$ with connected parts. As $G$ has bounded degree, and as $V(T)$ has finitely many $\Gamma$-orbits, $(T,\mathcal V')$ moreover has finite adhesion.
 
 It remains to show that for each $t\in V(T)$, $G\torso{V_t}$ is quasi-isometric to $G[V_t]$.
 For this, we let $t\in V(T)$, and fix any projection $\pi: V'_t\to V_t$ such that $\pi|_{V_t}=\mathrm{id}_{V_t}$ and such that for each $v\in V'_t$, $d_G(\pi(v), v)= d_G(V_t, v)=\min\sg{d_G(v,u): u\in V_t}$.

 We show that $\pi$ defines a quasi-isometry between $G[V'_t]$ and $G\torso{V_t}$. First, note that for every $u,v\in V'_t$ and every $\pi(u)\pi(v)$-path $P$ in $G\torso{V_t}$ of length $d$, there exists a $uv$-path $P'$ in $G[V'_t]$ of length at most $kd+2k$, obtained after replacing each edge of $E(G\torso{V_t})\setminus E(G[V_t])$ in $P$ by a path of size at most $k$, and connecting $u$ to $\pi(u)$ and $v$ to $\pi(v)$ with paths of size at most $k$. 
 Conversely, for every $uv$-path $P'$ of length $d$ in $G[V'_t]$, there exists some $\pi(u)\pi(v)$-path $P$ in $G\torso{V_t}$ of length at most $d\cdot(2k+1)$. This follows from the fact that for every edge $xy\in E(G[V'_t])$, we have $d_{G\torso{V_t}}(\pi(x), \pi(y))\leq 2k+1$.
 
 Hence, for every $u,v\in V_t$ we have 
  $$\frac{1}{k}d_{G[V'_t]}(u, v)-2\leq d_{G\torso{V_t}}(\pi(u),\pi(v))\leq (2k+1)d_{G[V'_t]}(u,v),$$
 thus, as $\pi$ is surjective, it indeed defines a quasi-isometry. 
\end{proof}

\subsection{Rays and ends}

 A \emph{ray} in a graph $G$ is an infinite simple one-way path $P=(v_1,v_2,\ldots)$. A \emph{subray} $P'$ of $P$ is a ray of the form $P'=(v_i,v_{i+1},\ldots)$ for some $i\geq 1$. 
We define an equivalence relation $\sim$ over the set $\mathcal R(G)$ of rays by letting $P\sim P'$ if and only if for every finite set $S\subseteq V(G)$ of vertices, there is a component of $G-S$ that contains infinitely many vertices from both $P$ and $P'$. 
The \emph{ends} of $G$ are the elements of $\mathcal R(G)/\sim$, the equivalence classes of rays under $\sim$. The \emph{degree} of an end $\omega$ is the supremum
number $k\in \mathbb{N}\cup \sg{\infty}$ of pairwise-disjoint rays
that belong to $\omega$. An end is \emph{thin} if it has finite
degree, and \emph{thick} otherwise.

\subsection{Decompositions of quasi-transitive graphs of finite treewidth}
\label{sec: tw}

Without being always explicitly named, bounded treewidth quasi-transitive graphs have attracted a lot of attention and admit many interesting characterisations. Below are a few of them; for more, see for example \cite{KPS, MS83, Woess89,TW, Pichel09}.

\begin{theorem}
\label{thm: tw-ends}
 Let $G$ be a connected $\Gamma$-quasi-transitive locally finite graph. Then the following are equivalent:
 \begin{itemize}
     \item[$(i)$] $G$ has finite treewidth;
     \item[$(ii)$] there exists a $\Gamma$-canonical tree-decomposition of $G$ with tight edge-separations and finite width;
     \item[$(ii)'$] there exists a $\Gamma$-canonical tree-decomposition $(T,\mathcal V)$ of $G$ with finite width, connected parts and such that $E(T)$ has finitely many $\Gamma$-orbits;
     \item[$(iii)$] the ends of $G$ have finite uniformly bounded degree;
     \item[$(iv)$] all the ends of $G$ are thin;
 \end{itemize}
\end{theorem}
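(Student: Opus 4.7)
The plan is to establish the equivalences by closing the cycle $(i) \Rightarrow (ii) \Rightarrow (ii)' \Rightarrow (iv) \Rightarrow (iii) \Rightarrow (i)$, together with the trivial implication $(iii) \Rightarrow (iv)$. The passage through $(ii) \Rightarrow (ii)' \Rightarrow (iv)$ is largely bookkeeping with the preparatory material. For $(ii) \Rightarrow (ii)'$, note that finite width implies finite adhesion, so Remark \ref{rem: nbaretes} gives that $E(T)$ has only finitely many $\Gamma$-orbits; applying Lemma \ref{lem: connected-parts} then produces a $\Gamma$-canonical tree-decomposition with connected parts whose bags are bounded-radius neighborhoods of the original ones, and since $G$ is locally finite quasi-transitive (hence of bounded degree) the new bags remain finite, so the new width is finite. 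For $(ii)' \Rightarrow (iv)$, finite width forces every bag to be finite, and since the parts are connected with finite adhesion, every ray of $G$ traces an infinite walk in $T$ through consecutive adhesion sets, while two equivalent rays of $G$ must eventually travel along the same ray of $T$. Consequently every end of $G$ projects to an end of $T$ with degree bounded by the maximal adhesion size, and hence every end of $G$ is thin.

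For $(iv) \Rightarrow (iii)$ I would argue by contradiction: assume every end is thin but that the degrees of ends are unbounded, and pick ends $\omega_n$ admitting $d_n \to \infty$ pairwise-disjoint rays starting at vertices $v_n$. Using quasi-transitivity, translate by $\gamma_n \in \Gamma$ to bring each $v_n$ into a fixed finite system of orbit representatives, so that after passing to a subsequence the initial vertices all coincide with some $v$. A diagonal compactness argument exploiting local finiteness—applied to the first $N$ edges of each ray family and letting $N \to \infty$—extracts an infinite family of pairwise-disjoint rays from $v$, producing a thick end, a contradiction. For $(iii) \Rightarrow (i)$ I would invoke the classical correspondence between end degree and treewidth in the locally finite quasi-transitive setting (as developed in the references cited in Section \ref{sec: tw}, notably Thomassen-Woess): uniformly bounded end degree precludes arbitrarily large half-grid-like obstructions, which forces $G$ to have finite treewidth.

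The main obstacle, and the technically deepest step, is $(i) \Rightarrow (ii)$: producing a $\Gamma$-canonical tree-decomposition with tight edge-separations and finite width from the hypothesis $\operatorname{tw}(G) < \infty$ alone. My plan here is to invoke a canonical tangle-tree decomposition at order $\operatorname{tw}(G)+1$: the maximal tangles of this order correspond $\operatorname{Aut}(G)$-equivariantly to the nodes of a tree, whose edges are labelled by nested tight separations of bounded order. Lemma \ref{lem: TWcut} ensures that only finitely many $\Gamma$-orbits of such tight separations appear in the construction, and the $\Gamma$-canonicity is built into the equivariance of the tangle machinery. The delicate point is to upgrade finite adhesion to \emph{finite width}, which one obtains by combining local finiteness of $G$ with orbit-finiteness of the separator family to uniformly bound the size of each torso—or, equivalently, by refining the canonical decomposition so that no torso inherits a tangle of order greater than $\operatorname{tw}(G)$.
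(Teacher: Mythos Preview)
Your cycle runs opposite to the paper's on the two substantive implications. The paper closes the loop as $(ii)'\Rightarrow(i)\Rightarrow(iii)\Rightarrow(iv)\Rightarrow(ii)\Rightarrow(ii)'$: the step $(i)\Rightarrow(iii)$ is a one-line grid-minor argument (an end of degree $k$ forces a $k\times k$ grid minor, hence treewidth at least $k$, so finite treewidth bounds all end degrees), and the only deep step, $(iv)\Rightarrow(ii)$, is outsourced to \cite[Theorem~7.4]{HamannStallings22}. Your treatment of $(ii)\Rightarrow(ii)'$ matches the paper's (and in fact is more careful, since you make explicit the use of Remark~\ref{rem: nbaretes} before invoking Lemma~\ref{lem: connected-parts}).

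By reversing the direction, you make $(i)\Rightarrow(ii)$ the load-bearing step, and here there is a real gap. A canonical tangle-tree decomposition at order $k$ has adhesion ${<}\,k$ and torsos with no $k$-tangle, but ``no $k$-tangle'' only says each torso has treewidth ${<}\,k$, not that it is \emph{finite}. Neither of your proposed repairs works: local finiteness plus orbit-finiteness of the separator family bounds how many separators pass through a given vertex (this is Lemma~\ref{lem: TWcut}), but says nothing about the size of a bag---an infinite torso of bounded treewidth is perfectly compatible with both; and ``refining so that no torso inherits a tangle of order greater than $\mathrm{tw}(G)$'' restates exactly what you need to prove. Producing a $\Gamma$-\emph{canonical} tree-decomposition of finite width from finite treewidth is essentially the whole content of the theorem, and the known route goes through thin ends and the cited result rather than through tangles.

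A secondary issue: in your compactness argument for $(iv)\Rightarrow(iii)$, the diagonal limit yields infinitely many pairwise-disjoint rays from $v$, but you have not argued that they lie in a \emph{single} end---the equivalence of the rays within each finite stage need not survive the limit. This is fixable (e.g.\ extract half-grid minors rather than bare ray families, and then appeal to Halin), but it needs more than you wrote. The paper avoids the issue entirely by proving $(i)\Rightarrow(iii)$ instead, which is both easier and does not use quasi-transitivity.
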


We give a short roadmap on a possible way to find a proof of the above equivalences the way we stated them.
The implications $(ii)'\Rightarrow (i)$ and $(iii)\Rightarrow (iv)$ are immediate, and $(ii)\Rightarrow (ii)'$ is an immediate consequence of Lemma \ref{lem: connected-parts}.
It is not hard to see that 
if a graph has an end of degree $k\geq 1$, then it admits the $k\times k$ grid as a minor, thus if $G$ has ends of arbitrary large degree it has infinite treewidth, so $(i)\Rightarrow (iii)$ holds. $(iv)\Rightarrow (ii)$ follows from \cite[Theorem 7.4]{HamannStallings22}.

\subsection{Nested sets of separations}
\label{sec: nested}

We define an order $\pRS$ on the set of separations of a graph $G$ as follows. For any two separations $\Sep, \Sepp$, we write $\Sep \pRS \Sepp$ if and only if $Y\subseteq Y'$ and $Z'\subseteq Z$. 

Two separations $\Sep, \Sepp$ of a graph $G$ are said to be \emph{nested} if $\Sep$ is comparable either with $\Sepp$ or with $(Z', S', Y')$ with respect to the order $\pRS$.
A set $\cN$ of separations of $G$ is \emph{nested} if all its separations are pairwise nested. We say that $\cN$ is \emph{symmetric} if for every $\Sep\in \cN$, we also have $(Z,S,Y)\in \cN$.
It is not hard to observe that if $(T, \mathcal V)$ is a tree-decomposition and $\cN$ denotes its set of edge-separations, then $\cN$ is symmetric and nested. Moreover, if $(T,\mathcal V)$ is $\Gamma$-canonical, then $\cN$ is also $\Gamma$-invariant with respect to the action of $\Gamma$ on the set of separations of $G$.

Extending a known result \cite[Theorem 4.8]{CDHS} for finite graphs, Elbracht, Kneip and Teegen proved in \cite[Lemma 2.7]{EKT20} that symmetry and nestedness together with a third property are sufficient conditions to obtain a tree-decomposition from a nested set of separations. 
  
We say that a set $\cN$ of separations has \emph{finite intervals} if
for every infinite increasing sequence $\Sepi{1}\prec \Sepi{2} \prec \cdots$ of separations from $\cN$, we have 
 $$\bigcap_{i\geq 1}(S_i\cup Z_i)=\emptyset.$$

\begin{theorem}[Lemma 2.7 in \cite{EKT20}]
 \label{thm: CDHS}
 Let $\mathcal N$ be a symmetric nested set of separations with finite intervals in an arbitrary graph $G$.
 Then there exists a tree-decomposition $(T,\mathcal V)$ of $G$ such that the edge-separations of $(T,\mathcal V)$ are exactly the separations from $\mathcal N$ and the correspondence is one-to-one. Moreover, if $\mathcal N$ is $\Gamma$-invariant with respect to some group $\Gamma$ acting on $G$, then $(T,\mathcal V)$ is $\Gamma$-canonical.
\end{theorem}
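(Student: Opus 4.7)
The plan is to build the decomposition tree $T$ directly from $\mathcal N$ via orientations. Write $s^* := (Z,S,Y)$ for $s = \Sep \in \mathcal N$. An \emph{orientation} of $\mathcal N$ is a subset $O \subseteq \mathcal N$ containing exactly one of $\{s, s^*\}$ for each unordered pair; it is \emph{consistent} if no two of its elements ``point away from each other'', i.e., there are no $s, t \in O$ with $s \pRS t^*$. Intuitively, a consistent orientation encodes a location in the putative tree from which all chosen separations ``look inward''.

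Next, I would take as the vertex set of $T$ the set $\mathcal O$ of consistent orientations of $\mathcal N$, joining $O, O' \in \mathcal O$ by an edge exactly when their symmetric difference is $\{s, s^*\}$ for some single $s \in \mathcal N$. I would verify that $T$ is a tree: connectedness follows by showing that for any $O, O'$, the separations on which they disagree form a chain under $\pRS$ (by nestedness) that can be flipped one by one, and the finite intervals hypothesis ensures this chain is finite; acyclicity follows because any closed walk would flip some separation an odd number of times, contradicting the orientation rule. The bijective correspondence between $E(T)$ and symmetric pairs of $\mathcal N$ is built into the definition.

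Third, for each $O \in \mathcal O$ define the bag $V_O := \bigcap_{(Y,S,Z) \in O}(S \cup Z)$; a direct inspection on adjacent orientations shows the edge-separation of $(T, \mathcal V)$ at the edge $OO'$ is exactly the corresponding separation in $\mathcal N$, with adhesion set equal to its separator $S$. The delicate point is coverage: every $v \in V(G)$ must lie in some bag. I would define $O_v$ by putting the ``$v$-side'' of each separation into $O_v$, with some tie-breaking rule when $v \in S$ that respects $\pRS$; nestedness ensures this yields a well-defined consistent orientation. The finite intervals hypothesis is exactly what guarantees $v \in V_{O_v}$: an infinite strictly increasing chain $s_1 \prec s_2 \prec \cdots$ in $O_v$ with $v$ in every $S_i \cup Z_i$ would force $v \in \bigcap_i (S_i \cup Z_i) = \emptyset$, which is impossible.

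Finally, the remaining tree-decomposition axioms (every edge of $G$ contained in some bag, and the bag-containing set of each vertex inducing a subtree of $T$) follow from the construction and nestedness. For the $\Gamma$-invariant case, the natural action $\gamma \cdot O := \{\gamma \cdot s : s \in O\}$ preserves consistency and adjacency in $T$, and sends $V_O$ bijectively to $V_{\gamma \cdot O}$, so $(T, \mathcal V)$ is $\Gamma$-canonical. The main obstacle I expect is the rigorous coverage argument: ruling out that a vertex $v$ ``falls off the tree'' into an infinite regress of finer and finer separations is precisely what the finite intervals condition prevents, and carrying this out carefully is the technical heart of the theorem.
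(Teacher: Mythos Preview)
The paper does not prove this theorem; it is quoted verbatim as Lemma~2.7 of \cite{EKT20} and used as a black box. So there is no in-paper proof to compare against.

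That said, your outline is essentially the construction carried out in \cite{EKT20} (and, in the finite case, in \cite{CDHS}): nodes are consistent orientations of $\mathcal N$, edges correspond to flipping a single separation, and bags are the intersections $\bigcap_{(Y,S,Z)\in O}(S\cup Z)$. One point deserves care: when $\mathcal N$ is infinite there can exist consistent orientations containing an infinite strictly $\pRS$-increasing chain (the finite-intervals hypothesis does not forbid such chains, it only forces their intersection to be empty). Such an orientation would sit ``at infinity'' with an empty bag and would not be reachable by a finite sequence of single flips from any ordinary node, so taking \emph{all} consistent orientations as $V(T)$ breaks connectedness. The fix, which is what \cite{EKT20} does, is to restrict to orientations that contain no such cofinal chain (equivalently, to the splitting stars of $\mathcal N$); the finite-intervals condition then guarantees every vertex and every edge of $G$ is captured by one of these, and your coverage argument goes through. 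Apart from this refinement your plan matches the cited proof.
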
 

\begin{lemma}
\label{lem: bd-degree-star}
 If $G$ is connected, locally finite and $\mathcal N$ is a nested set of separations in $G$ such that for every $\Sep\in \mathcal N$, $S$ has uniformly bounded diameter with respect to the metric $d_G$, then $\cN$ has finite intervals.
\end{lemma}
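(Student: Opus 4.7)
The plan is to argue by contradiction: suppose some vertex $v$ belongs to $\bigcap_{i\ge 1}(S_i\cup Z_i)$ for an infinite strict chain $\Sepi{1}\pRS\Sepi{2}\pRS\cdots$ in $\mathcal N$, and let $D$ denote a uniform upper bound on $\diam(S_i)$ (measured with $d_G$). The overall strategy is to trap an infinite subfamily of these separations inside a combinatorially finite collection, contradicting strictness of the chain.

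First I would dispose of the degenerate case in which every $Y_i$ is empty. Then $V(G)=S_i\cup Z_i$, and since $Z_i$ is strictly decreasing the sets $S_i$ are strictly increasing. Any vertex $w\in S_2$ then lies in every later $S_i$, and the diameter bound forces each such $S_i$ into the set $\{y\in V(G):d_G(w,y)\le D\}$, which is finite by local finiteness of $G$. Hence only finitely many values of $S_i$ are possible, contradicting strict growth.

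In the generic case, where some $Y_N$ (and hence all $Y_i$ for $i\ge N$) is nonempty, I would reindex the chain to start at $N$; this does not affect the intersection, since $S_i\cup Z_i=V(G)\setminus Y_i$ is a decreasing family in $i$. Pick $u\in Y_1$ and a $uv$-path $P$ in the connected graph $G$. Since $Y_i$ and $Z_i$ are not adjacent, $P$ must meet $S_i$ for every $i$, so pigeonholing over the finitely many vertices of $P$ produces some $w\in V(P)$ lying in $S_i$ for all $i$ in an infinite set $I$. As in the degenerate case, the diameter bound then confines the separators $S_i$ for $i\in I$ to subsets of the finite set $\{y\in V(G):d_G(w,y)\le D\}$, and a second pigeonhole extracts an infinite $I'\subseteq I$ along which the $S_i$ coincide with a single finite set $S$.

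The final step is to rule out infinitely many distinct triples $\Sepi{i}$ with middle coordinate $S$ for $i\in I'$. Since no edge of $G$ joins $Y_i$ to $Z_i$, each of these two sides is a union of connected components of $G-S$. Connectedness of $G$ together with finiteness of $S$ and local finiteness implies that $G-S$ has only finitely many components: each component meets $S$ in its $G$-neighbourhood, and only finitely many edges leave the finite set $S$. Hence only finitely many triples of the form $(Y,S,Z)$ with middle coordinate $S$ are possible, whereas the chain provides infinitely many pairwise distinct $\Sepi{i}$ for $i\in I'$, giving the required contradiction. I expect this component-counting step to be the most delicate part, since it is what converts the purely metric hypothesis on the separators into a combinatorial finiteness constraint on the separations themselves.
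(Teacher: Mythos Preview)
Your argument is correct, but it differs in strategy from the paper's proof. You work by contradiction: assuming some $v$ lies in every $S_i\cup Z_i$, you run a path from a point of $Y_1$ to $v$, pigeonhole to find a vertex lying in infinitely many separators, use the diameter bound to force infinitely many of these separators to coincide, and then finish by counting components of $G-S$. The paper instead argues directly: it first thins the chain to a subsequence whose separators are pairwise disjoint (using the diameter bound and local finiteness), and then shows that for any fixed vertex $x$ the distance $d_G(x,S_i)$ strictly decreases along the subsequence as long as $x$ remains in $Z_i$, so $x$ must fall out of $Z_i$ after at most $d_G(x,S_1)$ steps.

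Both approaches hinge on the same two ingredients---local finiteness and the uniform diameter bound---but exploit them differently. Your double-pigeonhole-plus-component-count is more elementary and reusable (it never looks at how the separators sit relative to one another beyond nestedness), whereas the paper's distance-decreasing argument is quantitatively sharper: it gives an explicit bound on the index at which a given vertex leaves the intersection. Your handling of the degenerate case $Y_i\equiv\emptyset$ is a minor wrinkle the paper avoids by working directly rather than by contradiction, but it is harmless.
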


Note that we do not require in Lemma \ref{lem: bd-degree-star} that the graphs $G[S]$ induced by the separators $S$ are connected, even though it will be the case later as we will apply this lemma on family of separations whose separators are cycles in planar graphs.

\begin{proof}
We let $\Sepi{1}\prec \Sepi{2}\prec \ldots$ denote an infinite sequence of separations from $\cN$. 

As $G$ is connected locally finite, for every finite set $X$ of vertices, $G-X$ has only finitely many connected components, hence there is only a finite number of indices $i\geq 1$ such that $X = S_i$. In particular, we may assume up to taking an infinite subsequence of separations that for every two indices $i\neq j$, $S_i\neq S_j$. Let $A\in \mathbb N$ be an upper bound of the set $\sg{\diam_G(S): \Sep\in \cN}$. Then for every $i\neq j$ such that $S_i\cap S_j\neq \emptyset$, the separator $S_j$ is included in the ball of radius $2A$ around $S_i$. In particular, as $G$ is locally finite, this ball is finite, hence up to taking an infinite subsequence of separations, we may assume moreover that for every $i\neq j$, $S_i\cap S_j=\emptyset$.

Observe that for every two separations $\Sep \prec \Sepp$ such that $S\cap S'=\emptyset$, we have $S'\subseteq Z$. Note that if some vertex $x$ belongs to $Y_i\cup S_i$ for some $i\geq 1$, then we also have $x\in Y_j$ for all $j> i$, hence it will be enough to prove that for every $x\in V(G)$, there exists some $i\geq 1$ such that $x\in Y_i\cup S_i$, in order to conclude that $\cN$ has finite intervals. We consider $x\in Z_1$. Observe that for each $i\geq 1$ such that $x\in Z_{i+1}$, we have $d_G(x,S_{i+1})<d_G(x,S_i)$. Indeed, by previous observation, we have $S_{i+1}\cup Z_{i+1}\subseteq Z_i$, hence $S_{i+1}$ separates $Z_{i+1}$ from $S_i$, and thus every shortest path from $x$ to $S_i$ must intersect $S_{i+1}$. 
In particular, if we set $D:=d_G(x,S_1)$, we must have $x\notin Z_{D+1}$, hence $x\in Y_{D+1}\cup S_{D+1}$, as desired.
\end{proof}

\subsection{Cycle nestedness in plane graphs}

Recall that if a graph $G$ is planar, and $\varphi: G\to \mathbb R^2$ is a planar embedding, then the pair $(G, \varphi)$ is called a \emph{plane graph}. We say that two cycles $C,C'$ in a plane graph $(G,\varphi)$ are \emph{nested} if $\varphi(C')$ does not intersect both the interior and the exterior regions of $C$. 
When $\varphi$ is fixed, we let $\Vint(C)$ (respectively $\Vout(C)$) denote the set of vertices $v\in V(G)$ such that $\varphi(v)$ belongs to the interior (respectively exterior) of $C$.
Then $(\Vint(C),V(C),\Vout(C))$ is a separation of $G$, and if $C$ and $C'$ are nested in $(G,\varphi)$, then $(\Vint(C),V(C),\Vout(C))$ and $(\Vint(C'),V(C'),\Vout(C'))$ are nested with respect to the definition of nestedness we gave in Section \ref{sec: nested}. However note that the converse is not true in general as the fact that $C$ and $C'$ are nested might depend of the planar embedding of $G$ we choose.

Recall that by Whitney's theorem \cite{Whitney}, every $3$-connected planar graph admits a unique embedding in the $2$-dimensional sphere $\mathbb S^2$, up to composition with a homeomorphism of $\mathbb S^2$. Imrich \cite{Imrich} moreover proved that this result also holds in infinite graphs. In particular if $G$ is planar $3$-connected, then for any cycles $C,C'$ both the unordered pair $\sg{\Vint(C),\Vout(C)}$ and the property for $C$ and $C'$ to be nested do not depend on the choice of the planar embedding $\varphi$ of $G$. In this case, we will then not need to precise the planar embedding of $G$ when talking about nestedness. Note also that if $G$ is $3$-connected, then for any pair of cycles $C,C'$ and any automorphism $\gamma\in \Aut(G)$, $C$ and $C'$ are nested if and only if $\gamma\cdot C$ and $\gamma\cdot C'$ are nested.

We say that a set $F\subseteq E(G)$ of edges is \emph{even} if every vertex from $V(G)$ has even degree in the graph $(V(G), F)$.
If we identify a cycle with its sets of edges, then the cycles of $G$ are exactly the inclusionwise minimal finite nonempty sets of edges that are even.
If $(C_1, \ldots, C_k)$ are cycles in $G$, their \emph{$\Zd$-sum} $\sum_{i=1}^k C_i$ is the finite subset of $E(G)$ obtained by keeping every edge appearing in an odd number of $C_i$'s. We let $\cC(G)$ denote the \emph{cycle space} of $G$, that is the $\Zd$-vector space consisting of $\Zd$-sums of cycles of $G$. We say that a subset $\cE$ of $\cC(G)$ \emph{generates} $\cC(G)$ if every element of $\cC(G)$ can be written as a (finite) $\Zd$-sum of elements from $\cE$.

\begin{remark}
\label{rem: even}
 It is well known and not hard to check that elements from $\cC(G)$ correspond exactly to the finite even subsets of $E(G)$.
\end{remark}

\subsection{VAP-free graphs}

Given a plane graph $(G,\varphi)$, an \emph{accumulation point} is a point $x\in \mathbb R^{2}$ that contains infinitely many vertices of $G$ in all its (topological) neighborhoods.
A planar graph $G$ is \emph{vertex-accumulation-free} or simply \emph{VAP-free} if it admits an embedding in $\mathbb R^2$ with no vertex accumulation point, or equivalently an embedding in $\mathbb S^2$ with at most one accumulation point.

A known result that can be deduced from \cite[Lemma 2.3]{Babai97} is that one-ended locally finite planar graphs are VAP-free.
We will show in Theorem \ref{thm: plan-struct-1} that locally finite VAP-free quasi-transitive graphs form the base class of graphs from which we can inductively build all locally finite quasi-transitive planar graphs.
The following is a folklore result about VAP-free graphs.

\smallskip

\begin{proposition}
 \label{prop: VAP-free}
    If $G$ is a quasi-transitive locally finite connected VAP-free graph with at least two ends, then $G$ has bounded treewidth.
\end{proposition}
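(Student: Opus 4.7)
The plan is to apply Theorem~\ref{thm: tw-ends}: $G$ has bounded treewidth if and only if all of its ends are thin, with the uniform bound on end-degrees following from quasi-transitivity and the finitely many orbits of ends it yields. It thus suffices to show that no end of $G$ has infinite degree.

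I would argue by contradiction: suppose $\omega$ is a thick end, with infinitely many pairwise disjoint rays $R_1, R_2, \ldots$ all in $\omega$. Since $G$ has a second end $\omega' \neq \omega$, I would pick a ray $R' \in \omega'$ and a finite separator $S \subseteq V(G)$ such that the tails of the $R_i$'s lie in a single component $C$ of $G - S$, while the tail of $R'$ lies in another component $C'$. Then I would fix a VAP-free embedding $\varphi$ of $G$ into $\mathbb{S}^2$ with unique accumulation point $p$; every ray of $G$ must then converge to $p$, so in particular so do the $\varphi(R_i)$'s and $\varphi(R')$. The key topological step is to construct a simple closed curve $\gamma$ in $\mathbb{S}^2$ that passes through $p$ and through a small neighborhood of $\varphi(S)$, crosses only finitely many edges of $G$ (namely only edges incident to vertices of $S$, by local finiteness), and topologically separates $\varphi(C)$ from $\varphi(C')$. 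Concretely, I would connect the finitely many points of $\varphi(S)$ by arcs running through faces of $G$, then extend both loose ends of the resulting open curve to $p$ along two opposite ``directions at infinity'', chosen so that $\varphi(C)$ and $\varphi(C')$ lie on opposite sides of $\gamma$.

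Once $\gamma$ is in place, the contradiction should come from combining two ingredients. First, the thickness of $\omega$ gives, by a Halin-type planar grid theorem, a subdivision $H$ of the infinite half-grid inside $C$ whose rays belong to $\omega$ and converge to $p$ on the $\varphi(C)$-side of $\gamma$. Second, quasi-transitivity ensures that the orbit $\Aut(G) \cdot H$ topologically ``saturates'' a neighborhood of $p$ on that side, so that $\varphi(C')$---which is disjoint from $\varphi(C)$ yet must itself accumulate at $p$---has no room to reach $p$ while avoiding $\varphi(C)$, yielding the desired contradiction. The main obstacle is the rigorous formalization of this ``no room'' step: it seems to require combining the uniqueness of planar embeddings in the 3-connected case (Whitney, extended by Imrich to locally finite graphs), an end-faithfulness statement for VAP-free embeddings, and careful book-keeping of how the $\Aut(G)$-translates of $H$ accumulate at $p$. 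The essential role of quasi-transitivity here is visible in the non-example of the disjoint union of two half-grids linked by a single edge, which is VAP-free with two thick ends but is not quasi-transitive.
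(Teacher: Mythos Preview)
Your high-level plan---reduce via Theorem~\ref{thm: tw-ends} to showing that a thick end forces one-endedness---matches the paper's. The divergence is in how the contradiction is obtained, and the ``saturation/no room'' step you flag as the main obstacle is a genuine gap. Concretely: your curve $\gamma$ separates $\varphi(C)$ from $\varphi(C')$, with the half-grid $H$ on the $C$-side. Even if $\Aut(G)\cdot H$ were to fill the $C$-side near $p$, this places no constraint on the $C'$-side, which is exactly where $\varphi(C')$ lives and where it is free to accumulate at $p$. If instead some translate of $H$ lands on the $C'$-side, all you learn is that $\omega'$ is also thick, not that $\omega=\omega'$. Your own non-example (two half-grids joined by an edge) shows precisely why the half-grid is too weak a tool here: in any VAP-free embedding it has an infinite face, and an entire second end can live inside that face.

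The paper sidesteps all of this by invoking a strengthening of Halin's theorem \cite{HalinGrid, GH24} that yields a subdivision $H$ of the \emph{full} hexagonal grid $\mathbb{H}$, not merely a half-grid. Since $\mathbb{H}$ is $3$-connected, Whitney--Imrich pins down its embedding, so every facial walk of $(H,\varphi|_H)$ is a finite cycle; VAP-freeness then guarantees that the interior of each such cycle contains only finitely many vertices of $G$. Any ray of $G$ (taken, by connectedness, to start in $V(H)$) must therefore meet $V(H)$ infinitely often, and so lies in the end of $H$. No curve $\gamma$, no saturation argument, no ad hoc use of quasi-transitivity beyond what is already built into Theorem~\ref{thm: tw-ends} and the grid theorem: the upgrade from half-grid to full grid is the missing idea.
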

\begin{proof}
We let $G$ be a locally finite VAP-free graph and $\varphi: G\to \mathbb R^2$ be a VAP-free planar embedding of $G$. 
   
Assume that $G$ has unbounded treewidth. Then by Theorem \ref{thm: tw-ends}, $G$ has a thick end, thus by a recent strengthening of Halin's grid theorem \cite{HalinGrid, GH24}, $G$ contains a subdivision $H$ of the infinite hexagonal grid $\mathbb H$ as a subgraph of $G$. By Whitney's unique embedding theorem \cite{Whitney, Imrich}, the hexagonal grid admits a unique embedding $\varphi_{\mathbb H}$ in the $2$-dimensional sphere $\mathbb S^2$, up to composition with a homeomorphism of $\mathbb S^2$, and thus essentially one VAP-free embedding in $\mathbb R^2$. In particular, it implies that the faces of $(H, \varphi_H)$ are bounded by subdivisions of the faces of $(\mathbb H, \varphi_{\mathbb H})$, and thus the facial cycles of $(H, \varphi_H)$ are finite.
We let $\omega_0$ denote the end of $H$ in $G$, i.e., the set of rays of $G$ that are equivalent to any ray of $H$.
Let $r$ be a ray in $G$. We will show that $r\in\omega_0$, which immediately implies that $G$ has a unique end, as desired. As $G$ is connected, we may assume that its first vertex belongs to $V(H)$. Thus every vertex of $r$ is either in $V(H)$ or drawn in a face of $(H, \varphi|_H)$. As $\varphi$ is a VAP-free embedding, every facial cycle of $(H, \varphi|_H)$ contains only finitely vertices of $G$ in its interior region. 
In particular, $r$ intersects infinitely many times $V(H)$ so we have $r\in \omega_0$.
\end{proof}

\section{Proof of Theorem \ref{thm: intro} and Corollary \ref{cor: intro}.}
 
\subsection{Generating families of cycles}

For every locally finite graph $G$ and every $i\geq 1$, we let $\cC_i(G)$ denote the subset of $\cC(G)$ of cycles that can be written as $\Zd$-sums of cycles of length at most $i$.

\begin{theorem}[Theorem 3.3 in \cite{HamannCycle}]
 \label{thm: plan-nested}
 Let $G$ be a $3$-connected locally finite planar graph and $\Gamma$ be a group acting quasi-transitively on $G$. Then there exists a nested $\Gamma$-invariant set of cycles generating $\cC(G)$. Moreover, for any $i\geq 0$ there exists a $\Gamma$-invariant nested family $\cE_i$ of cycles of length at most $i$ generating $\cC_i(G)$.
\end{theorem}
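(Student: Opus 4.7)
The plan is to derive the two statements separately, both rooted in the essentially unique planar embedding of $G$ afforded by the Whitney--Imrich theorem.

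For the first statement, I would take $\cE$ to be the set of finite facial cycles of the embedding of $G$ in $\mathbb{S}^2$. Two distinct faces have disjoint open interiors, so by definition any two such cycles are nested. Since the embedding is unique up to homeomorphism of $\mathbb{S}^2$, every $\gamma\in\Gamma$ permutes faces, hence $\cE$ is $\Gamma$-invariant. To show that $\cE$ generates $\cC(G)$, I would take any finite cycle $C$, note that its image is a Jordan curve bounding two open disks in $\mathbb{S}^2$, and argue via local finiteness that at least one of these disks contains only finitely many vertices of $G$, and hence only finitely many faces. Then $C$ is the $\Zd$-sum of the boundary cycles of those faces, which by \Cref{rem: even} lies in the subspace of $\cC(G)$ generated by $\cE$.

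For the second statement, the natural starting family is the set of all cycles of length at most $i$: this is $\Gamma$-invariant and generates $\cC_i(G)$ by definition. The task is to extract a nested $\Gamma$-invariant subfamily $\cE_i$ still generating $\cC_i(G)$. The key tool should be an \emph{uncrossing lemma} of the following form: if two cycles $C_1, C_2$ of length at most $i$ are not nested, then their images cross in the embedding, and $C_1+C_2$ can be rewritten as a $\Zd$-sum of pairwise nested cycles, each of length at most $i$. Topologically, crossings arise at shared vertices where the two cyclic orders interleave, and switching the routings at these crossings produces closed walks whose simple-cycle decomposition has total length $|C_1|+|C_2|$. Iterating this procedure $\Gamma$-equivariantly---by enumerating the finitely many $\Gamma$-orbits of crossing pairs, using the finiteness given by \Cref{lem: TWcut}, and resolving them in a canonical order---should yield the required $\cE_i$.

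The main obstacle I anticipate is preserving the length bound at each uncrossing step: a naive switch can split $C_1+C_2$ into one long and one short cycle, violating the bound $i$. The uncrossing must therefore be refined, perhaps by writing the symmetric difference as a $\Zd$-sum of several nested cycles, one per ``crossing region'', each of which is short by a planarity argument. A secondary, more bookkeeping-style difficulty is to ensure that the iterative process is canonically defined so as to commute with the $\Gamma$-action, and that its limit is both nested and still generating. The overall strategy is modeled on standard uncrossing arguments for nested families of separations in quasi-transitive graphs, suitably adapted to the planar cycle setting.
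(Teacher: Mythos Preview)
First, note that the paper does not prove this theorem: it is quoted as Theorem~3.3 of \cite{HamannCycle} and used as a black box, so there is no in-paper argument to compare against. That said, your proposal has a genuine gap.

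Your argument for the first statement rests on the claim that for any finite cycle $C$, at least one of the two open disks bounded by $\varphi(C)$ in $\mathbb{S}^2$ contains only finitely many vertices of $G$. This is false in general. A $3$-connected locally finite quasi-transitive planar graph need not be VAP-free; the two-ended example in Figure~\ref{fig:plan1} is precisely such a graph, and each of its red separating cycles has infinitely many vertices on \emph{both} sides. Such a cycle cannot be written as a \emph{finite} $\Zd$-sum of facial cycles, so the facial cycles do not generate $\cC(G)$. Indeed, the paper later invokes Thomassen's theorem \cite[Theorem~7.4]{Thomassen80} in exactly the converse direction: if the cycle space of a $2$-connected plane graph is generated by its facial walks, then the graph is VAP-free. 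Your facial-cycle family $\cE$ therefore works only in the VAP-free case, which is trivial from the paper's point of view; the content of \Cref{thm: plan-nested} lies precisely in producing the extra non-facial nested cycles (the red cycles in Figure~\ref{fig:plan1}) that are needed when $G$ has more than one end.

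For the second statement, your uncrossing outline is in the right spirit---Hamann's proof in \cite{HamannCycle} is an inductive uncrossing procedure---but you have correctly identified and not resolved the central obstacle: keeping every replacement cycle of length at most $i$ after an uncrossing step. That length control is the substance of the argument, not bookkeeping; a naive switch at crossing vertices can and does produce one cycle of length close to $|C_1|+|C_2|$, so something more is needed.
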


In the same paper, the author also proved the following result, which can be seen as a generalization of the result of \cite{Droms} that finitely generated planar groups are finitely presented.

\begin{theorem}[Theorem 7.2 in \cite{HamannPlanar}]
 \label{thm: plan-gen}
 Let $G$ be a quasi-transitive planar graph and $\Gamma$ be a group acting quasi-transitively on $G$. Then there exists a $\Gamma$-invariant set $\cE$ of cycles generating $\cC(G)$ with finitely many $\Gamma$-orbits.
\end{theorem}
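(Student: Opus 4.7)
My plan is to reduce to the $3$-connected case, apply Theorem~\ref{thm: plan-nested}, and then show that cycles of some uniformly bounded length generate the cycle space $\cC(G)$.

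For the reduction, the block-cut tree of $G$ is $\Gamma$-canonical with finitely many $\Gamma$-orbits of blocks, and every cycle of $G$ lies within a single block, so it suffices to handle the case when $G$ is $2$-connected. For a $2$-connected quasi-transitive graph, Tutte's canonical decomposition into $3$-connected torsos and cycles is again $\Gamma$-canonical, of adhesion at most two, with finitely many $\Gamma$-orbits of edges; any cycle of $G$ then decomposes as a $\Zd$-sum of cycles supported in torsos (with virtual edges expanded into paths of bounded length in $G$) together with bounded-length cycles coming from the adhesion structure. This reduces the theorem to the case where $G$ is $3$-connected.

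Assuming $G$ is $3$-connected, the second part of Theorem~\ref{thm: plan-nested} provides, for every $i\geq 0$, a $\Gamma$-invariant nested family $\cE_i$ generating $\cC_i(G)$. By quasi-transitivity there are only finitely many $\Gamma$-orbits of cycles of length at most $i$, so $\cE_i$ automatically has finitely many $\Gamma$-orbits. It therefore suffices to exhibit some $i_0$ with $\cC_{i_0}(G) = \cC(G)$, i.e.\ to prove that cycles of uniformly bounded length generate the whole cycle space of $G$.

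The main obstacle is producing this uniform bound. I would fix the unique planar embedding of $G$ (Whitney--Imrich), and take from the first part of Theorem~\ref{thm: plan-nested} a $\Gamma$-invariant nested family $\cE$ generating $\cC(G)$; the goal is to replace $\cE$ by a nested $\Gamma$-invariant generating family whose cycles have bounded length. Once this replacement is done, Lemma~\ref{lem: bd-degree-star} and Theorem~\ref{thm: CDHS} yield a $\Gamma$-canonical tree-decomposition of $G$ whose edge-separations are the corresponding cycle-separations; the restriction of the embedding to each part is then VAP-free, so every cycle of that part decomposes as a $\Zd$-sum of the finite facial cycles it encloses, which by quasi-transitivity have bounded length and finitely many $\Gamma$-orbits. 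Combined with the bounded-length adhesion cycles, this yields the desired $i_0$. The delicate step is therefore the bounded-length replacement of $\cE$: I expect to perform it by an inductive argument on cycle length, using local finiteness and quasi-transitivity to short-cut each long cycle of $\cE$ by shorter cycles drawn nestedly on both sides of its planar image, while preserving $\Gamma$-invariance and the generating property.
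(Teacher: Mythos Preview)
First, note that the paper does not give its own proof of this statement: Theorem~\ref{thm: plan-gen} is quoted as Theorem~7.2 of \cite{HamannPlanar} and used as a black box (together with Remark~\ref{rem: Z-sums} explaining the passage from $\mathbb Z$-sums to $\Zd$-sums). So there is no proof in the paper to compare against; the relevant question is simply whether your argument stands on its own.

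It does not, and the gap is precisely the step you flag as ``delicate''. Your logical structure in the $3$-connected case is: (a) find some $i_0$ with $\cC_{i_0}(G)=\cC(G)$; (b) to do this, take the nested $\Gamma$-invariant generating family $\cE$ from Theorem~\ref{thm: plan-nested} and replace it by a nested $\Gamma$-invariant generating family of bounded length; (c) once (b) is done, build the tree-decomposition via Lemma~\ref{lem: bd-degree-star} and Theorem~\ref{thm: CDHS}, observe the parts are VAP-free, and use their facial cycles to produce $i_0$. But step~(b) already \emph{is} step~(a): a generating family of bounded length immediately gives $i_0$, so (c) is redundant. Conversely, (c) cannot be used to prove (b), because Lemma~\ref{lem: bd-degree-star} needs separators of uniformly bounded diameter, i.e.\ cycles of bounded length, which is exactly what you are trying to establish. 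The argument is circular, and the only actual content you offer for the crux --- ``an inductive argument on cycle length \dots\ short-cut each long cycle of $\cE$ by shorter cycles drawn nestedly on both sides'' --- is a hope, not a proof. This short-cutting is precisely the substantial part of Hamann's argument in \cite{HamannPlanar}; the paper even remarks that although that proof is based on Theorem~\ref{thm: plan-nested}, the resulting family is no longer nested, which indicates the replacement is genuinely nontrivial and does not come for free from nestedness of $\cE$.

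A smaller issue: in your reduction via Tutte's decomposition, you assert that virtual edges can be ``expanded into paths of bounded length in $G$''. This is plausible once you know $E(T)$ has finitely many $\Gamma$-orbits and $G$ is connected, but it needs to be argued, and more importantly the torsos need not be locally finite or quasi-transitive in the way required to feed them back into Theorem~\ref{thm: plan-nested} without further work (compare how the paper handles $G^+$ in the proof of Corollary~\ref{cor: plan-general}).
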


Despite the fact that the proof of Theorem \ref{thm: plan-gen} from \cite{HamannPlanar} is based on Theorem \ref{thm: plan-nested}, the family which is constructed in Theorem \ref{thm: plan-gen} is not necessarily nested anymore. However we will observe in Corollary \ref{cor: planar} that combining Theorems \ref{thm: plan-nested} and \ref{thm: plan-gen}, we can find in the $3$-connected case a generating family of cycles which is both nested and has finitely many $\Aut(G)$-orbits.

\smallskip

The following is a basic fact in homology, and comes from a remark of Matthias Hamann (private communication).

\begin{remark}
 \label{rem: Z-sums}
Theorem \ref{thm: plan-gen} was stated in \cite{HamannPlanar} in a more general way for $\mathbb Z$-sums of oriented cycles, i.e., formal sums of oriented cycles with coefficients in $\mathbb Z$.
To see that \cite[Theorem 7.2]{HamannPlanar} implies Theorem \ref{thm: plan-gen} the way we stated it, observe that if a cycle can be written as a formal sum $\alpha_1\overrightarrow{C_1} + \cdots + \alpha_k\overrightarrow{C_k}$ of oriented cycles with coefficients $\alpha_i\in\mathbb Z$, then it can also be written in $\cC(G)$ as the $\Zd$-sum of the cycles $C_i$ such that $\alpha_i$ is odd.
\end{remark}

We observe that in the $3$-connected case, one can find a generating family $\cE$ of cycles combining both the properties of Theorems \ref{thm: plan-nested} and \ref{thm: plan-gen}.

\begin{corollary}
 \label{cor: planar}
 Let $G$ be a locally finite $3$-connected planar graph and $\Gamma$ be a group acting quasi-transitively on $G$. Then there exists a $\Gamma$-invariant set of cycles generating $\cC(G)$ which is nested and has finitely many $\Gamma$-orbits.
\end{corollary}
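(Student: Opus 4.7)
The plan is to combine Theorems \ref{thm: plan-nested} and \ref{thm: plan-gen} by using the former to produce nestedness at any fixed cycle length, and using the latter to show that some fixed length suffices.

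First, I would apply Theorem \ref{thm: plan-gen} to obtain a $\Gamma$-invariant set $\cE$ of cycles generating $\cC(G)$ with only finitely many $\Gamma$-orbits. The key observation is then that automorphisms preserve cycle length, so each $\Gamma$-orbit of $\cE$ consists of cycles of a single length; since there are finitely many orbits, there is an integer $N$ such that every cycle in $\cE$ has length at most $N$. Because $\cE$ generates $\cC(G)$, every element of $\cC(G)$ is a $\Zd$-sum of cycles of length at most $N$, which is precisely the statement that $\cC(G)\subseteq \cC_N(G)$; hence $\cC(G)=\cC_N(G)$.

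Next, I would apply the second part of Theorem \ref{thm: plan-nested} with $i=N$ to obtain a $\Gamma$-invariant nested family $\cE_N$ of cycles of length at most $N$ generating $\cC_N(G)=\cC(G)$. It remains to verify that $\cE_N$ has only finitely many $\Gamma$-orbits. For this, I would use that $G$ is locally finite and quasi-transitive: any cycle of length at most $N$ lies inside a ball of radius $N$ around any of its vertices, and such a ball contains finitely many vertices, so each vertex of $G$ lies in finitely many cycles of length at most $N$. Since the action of $\Gamma$ on $V(G)$ has finitely many orbits, the total number of $\Gamma$-orbits of cycles of length at most $N$ is finite, and in particular the $\Gamma$-orbits of cycles in $\cE_N$ are finite in number.

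There is no real obstacle in this argument; the only subtlety is the extraction of the uniform length bound $N$ from Theorem \ref{thm: plan-gen}, which is immediate once one remarks that automorphisms preserve cycle length and there are only finitely many orbits of cycles in $\cE$. The corollary thus follows directly from the conjunction of the two theorems together with the local finiteness of $G$.
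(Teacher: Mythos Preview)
Your proposal is correct and follows essentially the same route as the paper: use Theorem~\ref{thm: plan-gen} to obtain a uniform length bound $N$, then apply the second part of Theorem~\ref{thm: plan-nested} at that length, and finally argue from local finiteness and quasi-transitivity that there are only finitely many $\Gamma$-orbits of cycles of length at most $N$. The paper's proof is the same argument with slightly different phrasing (it invokes bounded degree rather than finite balls, which is equivalent here).
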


An example of a family satisfying the properties of Corollary \ref{cor: planar} is given in Figure \ref{fig:plan1} below.

\begin{proof}
 
 We let $\cE$ be a $\Gamma$-invariant family of cycles generating $\cC(G)$ with finitely many $\Gamma$-orbits given by Theorem \ref{thm: plan-gen}. Then in particular there is a bound $K\geq 0$ on the size of the cycles from $\cE$. By Theorem \ref{thm: plan-nested}, there exists a nested $\Gamma$-invariant family $\cE'$ of cycles of lenth at most $K$ in $G$ generating the set $\cC_K(G)$. In particular, $\cE'$ also generates the whole cycle space $\cC(G)$. 
 As $G$ has bounded degree, every vertex $v\in V(G)$ belongs to only finitely many cycles of size at most $K$. In particular, as $\Gamma$ acts quasi-transitively on $V(G)$, it implies that $\Gamma$ also acts quasi-transitively on the set of cycles of size at most $K$ in $G$. Thus $\cE'$ has finitely many $\Gamma$-orbits and satisfies the desired properties. 
\end{proof}

\begin{figure}[htb]
  \centering
  \includegraphics[scale=0.85]{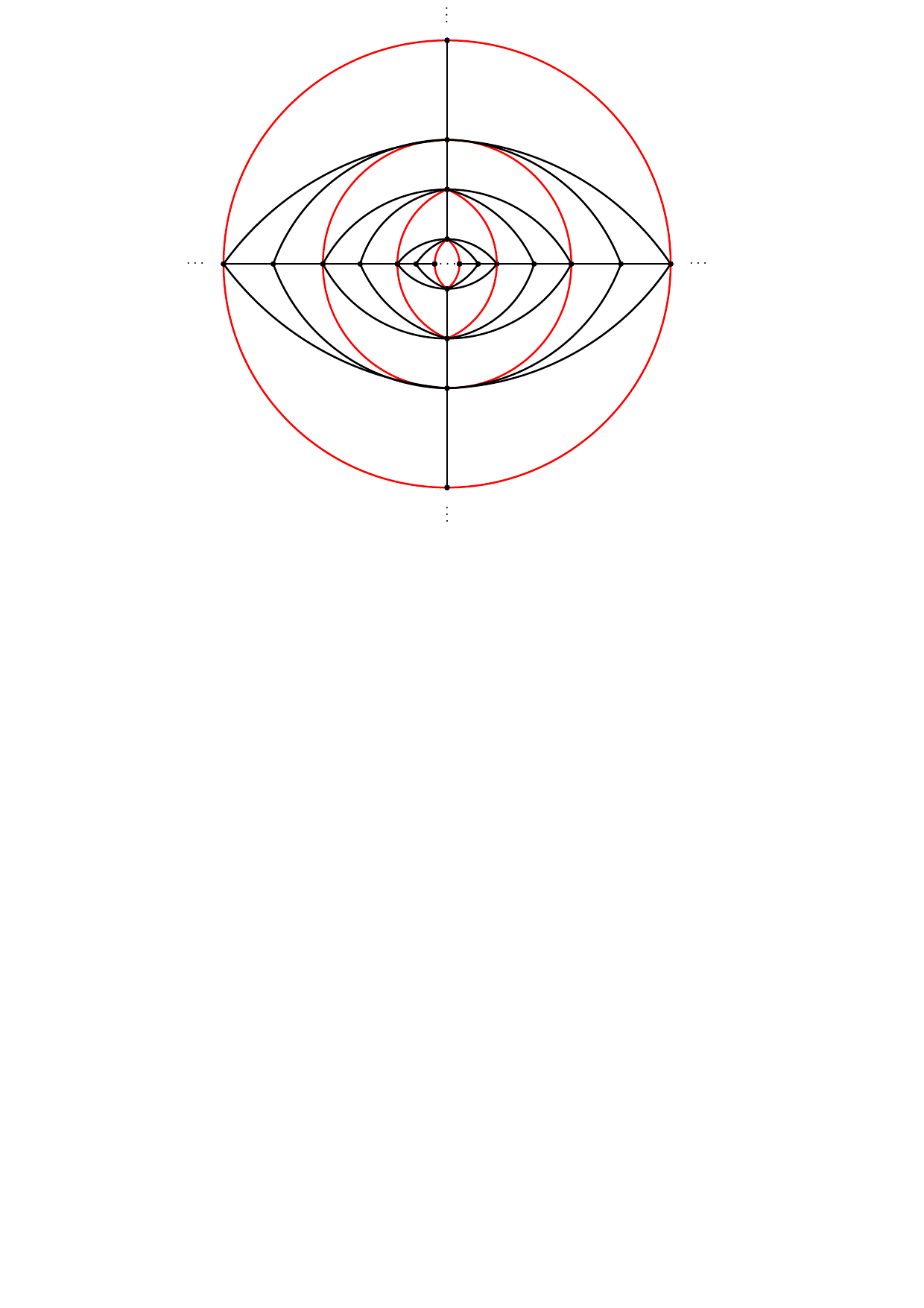}
  \caption{A $2$-ended locally finite quasi-transitive $3$-connected planar graph $G$. The set $\cE$ of cycles formed by the union of the red cycles together with the set of facial cycles of $G$ forms a nested $\Aut(G)$-invariant generating family of the cycle space of $G$. The subgraphs induced by the $\cE$-blocks are the subgraphs obtained after taking two consecutive red cycles, together with the vertices and edges lying between them.}
  \label{fig:plan1}
\end{figure}

\subsection{Structure of locally finite quasi-transitive planar graphs}

If $\cN$ is a set of separations, an \emph{$\cN$-block} is a maximal set $X\subseteq V(G)$ such that for each $\Sep\in \cN$, either $X\cap Y=\emptyset$ or $X\cap Z=\emptyset$.
If $(G,\varphi)$ is a plane graph and $\cE$ is a set of cycles, then an \emph{$\cE$-block} of $(G,\varphi)$ is a set of vertices which is an $\cN$-block, where $\cN$ denotes the symmetric set of separations induced by $\cE$ in $(G,\varphi)$.
 
\begin{lemma}
 \label{lem: part-VAP-free}
 Let $(G,\varphi)$ be a $3$-connected locally finite plane graph, $\Gamma$ be a group acting quasi-transitively on $G$ and $\cE$ be a $\Gamma$-invariant nested family of cycles of bounded length generating the cycle space $\cC(G)$. Then for each $\cE$-block $X$, the family $\cE_X:=\sg{C\in \cE: V(C)\subseteq X}$ generates the cycle space $\cC(G[X])$.
\end{lemma}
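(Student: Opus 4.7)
The plan is to take any element $D \in \cC(G[X])$ and reduce the claim to eliminating the ``outside'' cycles from an arbitrary $\cE$-representation of $D$. Since $\cE$ generates $\cC(G)$, I would write $D = \sum_{C \in \cF} C$ in $\cC(G)$ for some finite $\cF \subseteq \cE$, then split $\cF = \cF_X \sqcup \cF_{\mathrm{out}}$ with $\cF_X := \cF \cap \cE_X$. Setting $A := \sum_{C \in \cF_X} C$ and $B := \sum_{C \in \cF_{\mathrm{out}}} C$, we have $D = A + B$ with $A \in \mathrm{span}(\cE_X)$. Since $D$ and $A$ are both subsets of $E(G[X])$, Remark~\ref{rem: even} yields that $B = D + A$ also has support in $E(G[X])$, reducing the lemma to: any $\Zd$-sum of cycles in $\cE \setminus \cE_X$ whose support lies in $E(G[X])$ lies in $\mathrm{span}(\cE_X)$.

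For this, I would exploit the tree-decomposition induced by $\cE$. Since the cycles in $\cE$ have bounded length, they have bounded diameter in the locally finite graph $G$, so Lemma~\ref{lem: bd-degree-star} supplies the finite-intervals property and Theorem~\ref{thm: CDHS} produces a tree-decomposition $(T, \mathcal V)$ whose edge-separations are exactly those induced by $\cE$. The cycles of $\cE_X$ correspond to $T$-edges incident to the node $t_0$ whose bag equals $X$, while the cycles in $\cE \setminus \cE_X$ correspond to $T$-edges at $T$-distance at least $1$ from $t_0$. Using that every vertex of $G$ induces a connected subtree in $T$, I would argue that for each $C \in \cE \setminus \cE_X$ with $V(C) \cap X \neq \emptyset$, all vertices in $V(C) \cap X$ lie on a single common border cycle $C'_C \in \cE_X$ (namely the cycle attached to the first $T$-edge on the path from $t_0$ towards the edge $e_C$ associated to $C$); consequently every edge of $C$ contained in $E(G[X])$ has both endpoints on $C'_C$. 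This localizes the combinatorics of $B$ to finitely many border cycles of $X$.

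The heart of the proof, which I expect to be the main obstacle, is an induction eliminating the cycles of $\cF_{\mathrm{out}}$ one by one. Order them by the $T$-distance from $t_0$ to $e_C$ and pick $C$ realizing the maximum. Adding $C'_C \in \cE_X$ to the sum (which modifies $B$ only by an element of $\mathrm{span}(\cE_X)$) replaces $C$ by $C + C'_C$ in the representation. Planarity combined with the nestedness of $\cE$ implies that $C + C'_C$ can be rewritten, via Theorem~\ref{thm: plan-nested}, as a $\Zd$-sum of cycles in $\cE$ whose associated $T$-edges are strictly closer to $t_0$. Substituting this rewriting produces a new representation with one fewer cycle at the previous maximum distance, and iterating terminates in finitely many steps since $\cF$ is finite. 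The delicate point, which requires careful verification, is showing that $C + C'_C$ really does decompose into $\cE$-cycles strictly closer to $t_0$ in $T$; this rests on a planar-topological argument using that $C'_C$ traces the boundary of $X$ on the side of $C$, together with the nestedness of the separations from $\cN$ on that side.
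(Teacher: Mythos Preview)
Your overall plan---write $D=\sum_{C\in\cF}C$, split off $\cF_X$, and then argue that the remaining sum $B$ of outside cycles lies in $\mathrm{span}(\cE_X)$---is sound, and your identification of the border cycles $C'_C\in\cE_X$ via the tree $T$ is exactly the right geometric picture. The gap is in the inductive step. You claim that for $C$ at maximal $T$-distance, $C+C'_C$ can be rewritten ``via Theorem~\ref{thm: plan-nested}'' as a $\Zd$-sum of $\cE$-cycles whose $T$-edges are strictly closer to $t_0$. But Theorem~\ref{thm: plan-nested} is an \emph{existence} statement producing some nested generating family; it gives no control over how a specific element of $\cC(G)$ decomposes in your fixed family $\cE$. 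A priori a decomposition of $C+C'_C$ in $\cE$ could involve cycles at arbitrary $T$-distance, and nothing in your argument rules this out. Maximality of the $T$-distance of $C$ is never actually used to constrain the new cycles, so the induction does not visibly terminate.

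The paper avoids this difficulty by arguing differently. It takes a representation $C=\sum_{i=1}^k C_i$ with $k$ \emph{minimal}, assumes some $C_i\notin\cE_X$, and finds a border cycle $D\in\cE_X$ bounding a face $\Lambda$ of $(G[X],\varphi|_{G[X]})$ containing $C_i$. The crucial move is to consider \emph{all} indices $I=\{j:\varphi(C_j)\subseteq\overline\Lambda\}$ at once, rather than a single maximal one. Every edge whose drawing meets the open face $\Lambda$ lies in $E(G)\setminus E(G[X])$ and can appear only in cycles with index in $I$; since such edges are absent from $C$, they cancel in $\sum_{j\in I}C_j$. Hence $\sum_{j\in I}C_j\subseteq D$, and being an even set it equals $\emptyset$ or $D$. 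Replacing the whole block $\sum_{j\in I}C_j$ by $\emptyset$ or by the single cycle $D\in\cE_X$ drops $k$ by at least one (one checks $|I|\ge 2$), contradicting minimality. So the missing idea in your approach is this grouping-by-face argument: you should not try to push one outside cycle inward at a time, but rather collapse all cycles in a given face of $G[X]$ simultaneously to the face boundary.
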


\begin{proof}
 In this proof, we will identify every cycle of $G$ with its even set of edges.

 We let $C$ be a cycle of $\cC(G[X])$ and $C_1,\ldots, C_k\in \cE$ be such that $C$ equals to the $\Zd$-sum $\sum_{i=1}^k C_i$. Choose $C_1, \ldots, C_k$ that minimize the number $k$ of cycles from $\cE$ required to write $C$ as a $\Zd$-sum $\sum_{i=1}^kC_i$.
 As $X$ is an $\cE$-block of $(G,\varphi)$, every cycle from $\cE_X$ must be facial in the plane graph $(G[X], \varphi|_{G[X]})$, and $C$ is nested with every cycle from $\cE$. 
 We will show that $C_i\in \cE_X$ for all $i\in[k]$, implying the desired result. 
 
  Assume for a contradiction that $C_i\notin \cE_X$ for some $i\in [k]$. Then as $X$ is an $\cE$-block, there exists some cycle $C^*\in \cE$ separating $X$ from $C_i$, and one of its two associated separations $(Y,S,Z)$ is such that $X\subseteq Y\cup S$ and $V(C_i)\subseteq S\cup Z$. In particular, as $V(C)\subseteq X$, it implies that the cycles $C_i$ and $C$ are not drawn in the same face of $C^*$, hence there are two separations $\Sepi{1}, \Sepi{2}$ respectively associated to $C$ and $C_i$ such that 
  $$\Sepi{1}\pRS \Sep \pRS \Sepi{2}.$$
  
  As cycles from $\cE$ have bounded length, by Lemma \ref{lem: bd-degree-star}, the set $\cN$ of separations induced by $\cE\cup\sg{C}$ in $(G,\varphi)$ has finite intervals. In particular, there are only finitely many cycles $C^*$ separating $X$ from $C_i$ and we can choose such a cycle $C^*$ which is minimal with respect to $\pRS$.

   \begin{claim}
  \label{clm: Ex}
  We have $C^*\in \cE_X$.
 \end{claim}

 \begin{proof}[Proof of Claim]
  Assume for a contradiction that 
  $C^*\notin \cE_X$. Then $V(C^*)\not\subseteq X$, and as $X$ is an $\cE$-block, there exists some cycle $C'\in \cE$ separating $X$ from $C^*$. In particular, $C'$ also separates $X$ from $C_i$ and one of its two associated separations $\Sepp$ satisfies 
  $$\Sepi{1}\pRS \Sepp \pRS \Sep,$$
  contradicting the minimality of 
  $\Sep$.
 \end{proof}

  We now let $D$ be the cycle associated to the maximal separation $\Sepp\in \cE$ such that $\Sep\pRS \Sepp\pRS \Sepi{2}$ and such that $D\in \cE_X$.
 In particular, by previous observation $D$ is facial in $(G[X], \varphi|_{G[X]})$ so $\varphi(C_i)$ must be drawn in the closure of the face $\Lambda$ of $(G[X], \varphi|_{G[X]})$ which is delimited by $D$. We let $I\subseteq [k]$ be the set of indices $j\in [k]$ such that $\varphi(C_j)$ is contained in the closure of $\Lambda$. In particular, $i\in I$ so $I\neq \emptyset$. As $\cE$ is nested, for every $j\in [k]\setminus I$, $\varphi(C_j)$ does not intersect $\Lambda$. Let $C'$ be the $\Zd$-sum $\sum_{j\in I}C_j$. Note that the way we defined it, $C'$ is a finite subset of edges of $E(G)$ but not necessarily a cycle of $G$.

 First, note that for each $uv\in E(G)\setminus E(G[X])$, as $uv\notin C$, it must appear in an even number of $C_j$'s.
 In particular, as we assumed that $I\neq \emptyset$, 
 we must have $|I|\geq 2$.
 In particular, for every $uv\in E(G)$ such that $\varphi(uv)$ intersects $\Lambda$, $uv$ can only appear in cycles $C_j$ such that $j\in I$, and its total number of occurences in $(C_j)_{j\in [k]}$ is even, so $uv\notin C'$. 
 It implies that $C'\subseteq D$. By Remark \ref{rem: even}, $C'$ is even so as $D$ is a cycle, we have either $C'=\emptyset$ or $C'=D$. According to whether $C'=\emptyset$ or $C'=D$, we consider the decomposition of $C$ as a sum of cycles from $\cE$ obtained after either removing the sum $\sum_{j\in I}C_j$ in the decomposition of $C$ or replacing it by the cycle $D\in \cE_X$. In both cases it gives a decomposition of $C$ involving at most $k-|I|+1<k$ cycles from $\cE$, and thus contradicting the minimality of $k$.
\end{proof}

We are now ready to give a proof of Theorem \ref{thm: intro}.

\begin{theorem}
 \label{thm: plan-struct-1}
 Let $G$ be a locally finite $3$-connected quasi-transitive planar graph and $\Gamma$ be a group acting quasi-transitively on $G$. Then there exists a $\Gamma$-canonical tree-decomposition $(T,\mathcal V)$ of $G$ of finite adhesion whose edge-separations correspond to separations associated to cycles of $G$ and whose parts are $2$-connected VAP-free quasi-transitive graphs. Moreover $E(T)$ has finitely many $\Gamma$-orbits.
\end{theorem}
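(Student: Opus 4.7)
The plan is to combine Corollary \ref{cor: planar} with Theorem \ref{thm: CDHS}, and then read off the required properties of the parts using Lemma \ref{lem: part-VAP-free}.

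I start by applying Corollary \ref{cor: planar} to obtain a $\Gamma$-invariant nested family $\cE$ of cycles generating $\cC(G)$ with only finitely many $\Gamma$-orbits. Since cycles in the same $\Gamma$-orbit have equal length, there is a uniform bound $K\geq 3$ on $|V(C)|$ for $C\in\cE$. By Whitney--Imrich, $G$ has an essentially unique embedding $\varphi$ in $\mathbb S^2$, so the assignment $C \mapsto (\Vint(C), V(C), \Vout(C))$ is well-defined and $\Gamma$-equivariant. Let
$$\cN := \{(\Vint(C), V(C), \Vout(C)) : C \in \cE\}\cup\{(\Vout(C), V(C), \Vint(C)) : C \in \cE\}.$$
Since $\cE$ is nested in the plane sense, $\cN$ is a symmetric nested $\Gamma$-invariant set of separations (see Section \ref{sec: nested}). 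Each separator $V(C)$ has diameter at most $K$ in $G$, so Lemma \ref{lem: bd-degree-star} gives that $\cN$ has finite intervals. Theorem \ref{thm: CDHS} then produces a $\Gamma$-canonical tree-decomposition $(T, \mathcal V)$ of $G$ whose edge-separations are exactly $\cN$. The adhesion is bounded by $K$, and since the correspondence is one-to-one, the finite number of $\Gamma$-orbits of $\cE$ transfers to $E(T)$.

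I then verify the properties of the parts. A standard orbit-counting argument shows that $V(T)$ also has finitely many $\Gamma$-orbits and that each node stabilizer $\Gamma_t$ acts quasi-transitively on $V_t$, so every part $G[V_t]$ is quasi-transitive. For VAP-freeness, I apply Lemma \ref{lem: part-VAP-free} with $X=V_t$: the family $\cE_X$ generates $\cC(G[V_t])$, and (as observed in the proof of that lemma) each of its cycles is facial in $(G[V_t], \varphi|_{G[V_t]})$. A classical planarity criterion, namely that a locally finite plane graph whose cycle space is generated by its finite facial cycles admits a VAP-free embedding, then ensures that $G[V_t]$ is VAP-free. Finally, the $2$-connectedness of the parts should follow from the $3$-connectedness of $G$ together with the cyclic nature of the separators: any cut-vertex of $G[V_t]$ would combine with a suitable vertex on a bounding cycle of an adjacent adhesion set to produce a $2$-cut of $G$.

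The main obstacle I expect is the VAP-freeness step: deducing from Lemma \ref{lem: part-VAP-free} that the induced embedding $\varphi|_{G[V_t]}$ is VAP-free relies on a classical but nontrivial characterization of VAP-free embeddings via the generation of the cycle space by finite facial cycles. A secondary subtlety is the precise identification of the bags produced by Theorem \ref{thm: CDHS} with $\cE$-blocks so that Lemma \ref{lem: part-VAP-free} applies directly, together with the verification of $2$-connectedness of parts rather than of torsos.
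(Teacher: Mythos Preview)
Your proposal is correct and follows essentially the same route as the paper: apply Corollary~\ref{cor: planar}, pass to the induced separation system, invoke Lemma~\ref{lem: bd-degree-star} and Theorem~\ref{thm: CDHS}, and then use Lemma~\ref{lem: part-VAP-free} together with a facial-cycle criterion for VAP-freeness. For the two obstacles you flag, the paper resolves them exactly as you anticipate: the bags of the tree-decomposition from Theorem~\ref{thm: CDHS} are identified as either ``hubs'' (vertex sets of cycles in $\cE$, hence finite) or $\cE$-blocks via \cite[Theorem~4.8]{CDHS}, and the VAP-free criterion invoked is \cite[Theorem~7.4]{Thomassen80}, which requires $2$-connectedness as a hypothesis---so the paper establishes $2$-connectedness first (by rerouting $G$-paths through the cycle-spanned adhesion sets) rather than via your cut-vertex argument.
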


\begin{proof} 
 We let $\cE$ be a nested $\Gamma$-invariant family of cycles of $G$ generating $\cC(G)$ with finitely many $\Gamma$-orbits given by Corollary \ref{cor: planar}. We consider the associated symmetric family $\cN$ of separations of $G$ of the form $(\Vint(C), V(C), \Vout(C))$
 and $(\Vout(C), V(C), \Vint(C))$ for each $C\in \cE$.
 As $G$ is $3$-connected, our previous remarks imply that $\cN$ is a $\Gamma$-invariant nested family of separations. Moreover, as $\cE$ has finitely many $\Gamma$-orbits, separations in $\cN$ must have finite bounded order so Lemma \ref{lem: bd-degree-star} implies that $\cN$ has finite intervals. We thus can apply Theorem \ref{thm: CDHS} and find a $\Gamma$-canonical tree-decomposition $(T,\mathcal V)$ whose edge-separations are in one-to-one correspondence with the different separations of $\cN$. In particular, each adhesion set of $(T,\mathcal V)$ admits a spanning cycle from $\cN$ and thus is finite. As $\cN$ has finitely many $\Gamma$-orbits, $\Gamma$ acts quasi-transitively on $E(T)$. By \cite[Lemma 3.13]{EGL23}, $\Gamma_t$ induces a quasi-transitive action on the part $G[V_t]$ of $(T,\mathcal V)$ for every $t\in V(T)$.
 
 Note that as $G$ is connected, the torsos of $(T,\mathcal V)$ must be connected. 
 Moreover, as the adhesion sets of $(T,\mathcal V)$ are connected, each part $G[V_t]$ must also be connected. Moreover, note that as adhesion sets of $(T,\mathcal V)$ contain spanning cycles, then for every $t\in V(T)$, $|V_t|\geq 3$ and for any three different vertices $u,v,w\in V_t$, any path in $G$ from $u$ to $v$ avoiding $w$ can be modified to a path in $G[V_t]$ from $u$ to $v$ avoiding $w$. Hence each part of $(T,\mathcal V)$ is $2$-connected. 

 It remains to show that each part of $(T,\mathcal V)$ is VAP-free. By \cite[Theorem 4.8]{CDHS}, parts of $(T,\mathcal V)$ are either ``hubs'', i.e., vertex sets of cycles from $\cE$, or $\cN$-blocks\footnote{Note that \cite[Theorem 4.8]{CDHS} only deals with finite graphs. However, the tree-decomposition given by \cite[Lemma 2.7]{EKT20} generalizes the construction from \cite{CDHS} when one considers nested sets of separations having finite intervals, and the proof that its bags are either hubs or blocks extends in this case.} (and equivalently $\cE$-blocks). Hubs parts are finite and thus obviously VAP-free. Assume now that $G[V_t]$ is an $\cE$-block for some $t\in V(T)$. Then by Lemma \ref{lem: part-VAP-free}, $\cE_{V_t}$ generates the cycle space $\cC(G[V_t])$. In particular, note that cycles from $\cE_{V_t}$ must be facial in the plane graph $(G[V_t], \varphi|_{G[V_t]})$. The plane graph $(G[V_t], \varphi|_{G[V_t]})$ is thus $2$-connected and its cycle space is generated by a family of facial walks, so by \cite[Theorem 7.4]{Thomassen80} it must be a VAP-free graph.
\end{proof}

Combining Theorem \ref{thm: plan-struct-1} with results from \cite{CTTD} allowing to combine canonical tree-decompositions, we obtain the following result for $3$-connected planar graphs. 

\begin{corollary}
\label{cor: plan-3conn}
 For every locally finite $3$-connected quasi-transitive planar graph $G$, and every group $\Gamma$ acting quasi-transitively on $G$, there exists a $\Gamma$-canonical tree-decomposition $(T,\mathcal V)$ of $G$ of finite adhesion whose parts are connected and either finite or quasi-transitive one-ended, and such that $E(T)$ has finitely many $\Gamma$-orbits.
\end{corollary}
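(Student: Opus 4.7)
The plan is a two-level refinement: first split $G$ into $2$-connected VAP-free quasi-transitive parts using Theorem \ref{thm: plan-struct-1}, then further decompose those parts that are not already finite or one-ended, using the theory of bounded-treewidth quasi-transitive graphs from Section \ref{sec: tw}.

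I would begin by applying Theorem \ref{thm: plan-struct-1} to produce a $\Gamma$-canonical tree-decomposition $(T_0,\mathcal V_0)$ of $G$ with finite adhesion, whose parts are $2$-connected VAP-free quasi-transitive subgraphs, and where $E(T_0)$ has finitely many $\Gamma$-orbits. By Remark \ref{rem: nbaretes}, $V(T_0)$ also has only finitely many $\Gamma$-orbits; picking representatives $t_1,\ldots,t_m$, the stabilizer $\Gamma_i:=\Gamma_{t_i}$ acts quasi-transitively on $G[V_{t_i}]$ (as recorded inside the proof of Theorem \ref{thm: plan-struct-1}). If $G[V_{t_i}]$ has at most one end, I would leave it untouched, since it is already either finite or quasi-transitive one-ended. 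Otherwise $G[V_{t_i}]$ has at least two ends; being VAP-free, Proposition \ref{prop: VAP-free} gives that it has bounded treewidth, and the equivalence $(i)\Leftrightarrow(ii)'$ of Theorem \ref{thm: tw-ends} then yields a $\Gamma_i$-canonical tree-decomposition $(T_{t_i},\mathcal V_{t_i})$ of $G[V_{t_i}]$ of finite width, with connected parts, and with $E(T_{t_i})$ having finitely many $\Gamma_i$-orbits; since the width is finite, all its parts are in fact finite. Using $\Gamma$-equivariance, this choice extends coherently to every node $t$ in the orbit of $t_i$.

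Finally I would combine the inner tree-decompositions $(T_t,\mathcal V_t)$ with the outer $(T_0,\mathcal V_0)$ via the combination procedure for canonical tree-decompositions from \cite{CTTD}, obtaining a single $\Gamma$-canonical tree-decomposition $(T,\mathcal V)$ of $G$. Its adhesion is bounded by the adhesion of $(T_0,\mathcal V_0)$ plus the widths of the $(T_{t_i},\mathcal V_{t_i})$, hence finite; each of its parts is either an unchanged finite-or-one-ended part of $(T_0,\mathcal V_0)$, or a finite bag of some $(T_t,\mathcal V_t)$ possibly enlarged by a bounded-size adhesion set of $(T_0,\mathcal V_0)$, so in every case it is connected and either finite or quasi-transitive one-ended; the finite $\Gamma$-orbit count of $E(T)$ follows from the corresponding finiteness at both levels. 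The main technical hurdle is the combination step itself: one must ensure that each adhesion set of $(T_0,\mathcal V_0)$ is absorbed into a single bag of the inner tree-decomposition (via a standard bag-enlargement routine along the tree, which \cite{CTTD} is designed to handle) and that the $\Gamma_i$-canonical local choices glue equivariantly into a globally $\Gamma$-canonical object --- the latter being exactly what $\Gamma_i$-canonicity guarantees.
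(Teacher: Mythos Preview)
Your approach mirrors the paper's proof almost exactly: apply Theorem~\ref{thm: plan-struct-1}, then refine the multi-ended VAP-free parts via Proposition~\ref{prop: VAP-free} and Theorem~\ref{thm: tw-ends}$(ii)'$, and glue via \cite{CTTD}. The one substantive difference is that you decompose the \emph{parts} $G[V_{t_i}]$, whereas the paper decomposes the \emph{torsos} $G\torso{V_{t_i}}$. This matters because the combination result from \cite{CTTD} (stated in the form used here as \cite[Proposition~3.10, Remark~3.11]{EGL23}) takes as input tree-decompositions of the torsos, so that each outer adhesion set---being a clique in the torso---automatically lies in a single inner bag. Your bag-enlargement workaround is plausible, but making it $\Gamma_i$-canonical is not something \cite{CTTD} does for you; you would have to assign each adhesion cycle to an inner bag in an equivariant way. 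The paper sidesteps this: it invokes Lemma~\ref{lem: torso-QI} to show $G\torso{V_t}$ is quasi-isometric to $G[V_t]$, hence also of bounded treewidth, and then applies Theorem~\ref{thm: tw-ends}$(ii)'$ directly to the torso. Finally, after the refinement the paper applies Lemma~\ref{lem: connected-parts} to recover connected parts in $G$ that are quasi-isometric to the (at most one-ended) torsos; your claim that the combined parts are automatically connected and one-ended in $G$ skips this step.
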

 
\begin{proof}
 Let $(T,(V_t)_{t\in V(T)})$ be the $\Gamma$-canonical tree-decomposition of $G$ given by Theorem \ref{thm: plan-struct-1}. We let $t\in V(T)$ be such that $V_t$ is infinite. 
 If $G[V_{t}]$ has at least $2$ ends, then Proposition \ref{prop: VAP-free} implies that $G[V_{t}]$ has bounded treewidth. By Lemma \ref{lem: torso-QI}, $G\torso{V_{t}}$ is quasi-isometric to $G[V_{t}]$, so as the property of having finite treewidth in bounded degree graphs is invariant under taking quasi-isometries, $G\torso{V_{t}}$ also has bounded treewidth. By Theorem \ref{thm: tw-ends} $(ii)'$, there exists a $\Gamma_{t}$-canonical tree-decomposition $(T_{t}, \mathcal V_{t})$ of $G\torso{V_{t}}$ of finite width whose parts are connected, and such that $E(T_{t})$ has finitely many $\Gamma_{t}$-orbits. Then by \cite[Proposition 7.2]{CTTD} 
 (see \cite[Proposition 3.10, Remark 3.11]{EGL23} for a statement of this result closer to the one we use here), there exists a $\Gamma$-canonical tree-decomposition $(T', \mathcal V')$ of $G$ \emph{refining} $(T,\mathcal V)$, whose torsos are connected with at most one end, and whose adhesion sets are either adhesion sets of $(T,\mathcal V)$ or adhesion sets of some $(T_t, \mathcal V_t)$.
 In particular, as $G$ is locally finite quasi-transitive, every finite set is the separator of a finite bounded number of separations, hence $E(T')$ must have only finitely many $\Gamma$-orbits. 
 Finally, we find a tree-decomposition of $G$ with the desired properties by applying Lemma \ref{lem: connected-parts} to $(T',\mathcal V')$.
 The fact that its parts are quasi-transitive follows from \cite[Lemma 3.13]{EGL23}.
\end{proof}

See Figure \ref{fig:plan3} below for an illustration of the tree-decomposition obtained (which turns out to be a path-decomposition in this specific example) when applying the proof of Theorem \ref{thm: plan-struct-1} with respect to the family of cycles from Figure \ref{fig:plan1}.

\begin{figure}[htb] 
  \centering
  \includegraphics[scale=0.74]{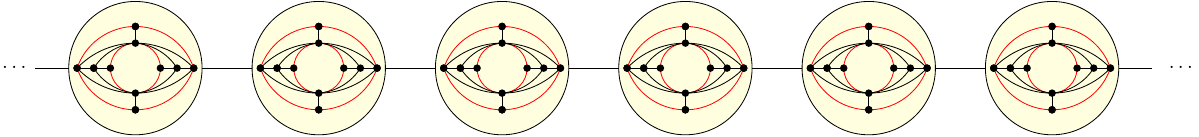}
  \caption{The path-decomposition of the graph from Figure \ref{fig:plan1} obtained after applying Theorem \ref{thm: CDHS} to the nested family of red cycles. The red cycles form the adhesion sets of the path-decomposition.}
  \label{fig:plan3}
\end{figure}

To derive a structure theorem for general planar graphs, we will need to decompose graphs with connectivity at most $2$ into parts of larger connectivity. To do this, we will use a general decomposition theorem, initally proved by Tutte \cite{Tutte} in the finite case, and generalized to infinite graphs in \cite{DSS98}. See also \cite[Theorem 1.6.1]{CK23} for a more precise version.

\begin{theorem}[\cite{DSS98}]
 \label{thm: Tutte}
 Every  locally finite graph $G$ has  a canonical tree-decompo\-sition  of adhesion at most $2$ with tight edge-separations, whose torsos are minors of $G$ and are complete graphs of order at most $2$, 
 cycles, or $3$-connected graphs.
\end{theorem}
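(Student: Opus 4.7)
The plan is to handle $1$-separations and $2$-separations separately and then combine. For the $1$-separations, the classical block-cutvertex tree yields a canonical tree-decomposition of adhesion at most~$1$ whose parts are either single vertices, single edges ($K_2$), or maximal $2$-connected subgraphs; this decomposition is canonical because blocks and cutvertices are $\Aut(G)$-invariant. It then suffices to produce, for each $2$-connected part $H$, an $\Aut(H)$-canonical tree-decomposition of $H$ of adhesion at most~$2$ with tight edge-separations whose torsos are cycles or $3$-connected, and to combine the two using the refinement technique of \cite{CTTD} already exploited in the proof of Corollary~\ref{cor: plan-3conn}.

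For the $2$-separation step, I would consider the $\Aut(H)$-invariant set $\cN$ of all tight proper separations of $H$ of order exactly~$2$. The key technical ingredient is the classical \emph{uncrossing property}: if two separations in $\cN$ cross, then one of the four corner separations obtained by intersecting a side of one with a side of the other is again a tight separation of order at most~$2$ nested with both. This rests on the standard submodular inequality for separation orders combined with a short case analysis that uses the tightness hypothesis to preserve tightness of at least one corner. Calling a separation \emph{totally nested} if it is nested with every member of $\cN$, the subfamily $\cN^\star \subseteq \cN$ of totally nested separations is then nested and $\Aut(H)$-invariant by construction. Once one verifies a finite-intervals-type condition for $\cN^\star$ (a technical matter relying on local finiteness and the fact that separators have size at most~$2$), Theorem~\ref{thm: CDHS} produces the desired $\Aut(H)$-canonical tree-decomposition.

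It remains to analyze the torsos. By maximality of $\cN^\star$, a torso either contains no further non-trivial tight $2$-separation of $H$ (so it is $3$-connected or has at most $2$ vertices), or else any two of its remaining non-trivial tight $2$-separations cross. In the latter case, following Tutte's cyclic-arrangement argument in the finite case and its infinite generalization in \cite{DSS98}, the pairwise crossings force the torso to be a cycle. Finally, each virtual edge across an adhesion set of size $2$ can be realized by an internally disjoint $H$-path joining the two adhesion vertices, so all torsos are minors of $G$.

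The main obstacle, as indicated above, is extending the uncrossing and cyclic-arrangement analysis from the finite to the locally finite setting, where one must carefully handle infinite families of pairwise-crossing tight $2$-separations and verify the finite-intervals condition needed to invoke Theorem~\ref{thm: CDHS}; this is precisely the technical heart of \cite{DSS98}.
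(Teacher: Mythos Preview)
The paper does not prove this theorem: it is quoted as a known result from \cite{DSS98} (with a pointer to \cite{CK23} for a sharper formulation) and is used as a black box in the proof of Corollary~\ref{cor: plan-general}. There is therefore no proof in the paper to compare against.

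That said, your sketch is a faithful outline of the standard argument behind Tutte's decomposition and its infinite extension. A couple of points deserve more care than you indicate. First, the finite-intervals condition for $\cN^\star$ does \emph{not} follow from Lemma~\ref{lem: bd-degree-star}, since a $2$-element separator in a $2$-connected graph can have arbitrarily large diameter; the correct route is via Lemma~\ref{lem: TWcut}, which bounds the number of tight order-$2$ separations whose separator contains a fixed vertex, and then a short path argument shows that no vertex can lie in $S_i\cup Z_i$ for all $i$ along an infinite strictly increasing chain. Second, your claim that ``if two separations in $\cN$ cross then some corner is again tight and nested with both'' is not quite how the totally-nested family $\cN^\star$ is shown to be nonempty and to distinguish the eventual torsos; the actual argument (already in Tutte's finite version) is that a tight $2$-separation crossed by another tight $2$-separation forces a specific ``wheel'' or ``cycle-of-bridges'' local structure, and it is this structure that produces the cycle torsos. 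Your final paragraph acknowledges that this cyclic-arrangement analysis is the technical heart of \cite{DSS98}, which is accurate; everything else in your outline is routine.
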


Our proof of Corollary \ref{cor: intro} will now simply follow from a combination of Corollary \ref{cor: plan-3conn} with Theorem \ref{thm: Tutte}.

\begin{corollary}
\label{cor: plan-general}
 For every connected planar locally finite graph $G$, and every group $\Gamma$ acting quasi-transitively on $G$, there exists a $\Gamma$-canonical tree-decomposition $(T,\mathcal V)$ of finite adhesion whose parts $G[V_t]$ are connected, either finite or one-ended planar graphs, on which $\Gamma_t$ acts quasi-transitively for each $t\in V(T)$, and such that $E(T)$ has finitely many $\Gamma$-orbits.
\end{corollary}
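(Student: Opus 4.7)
The plan is to combine the canonical Tutte decomposition of $G$ (Theorem \ref{thm: Tutte}) with the decomposition of $3$-connected torsos given by Corollary \ref{cor: plan-3conn}, glued together via the refinement machinery from \cite{CTTD} already used in the proof of that corollary. First I would apply Theorem \ref{thm: Tutte} to obtain a canonical (hence $\Gamma$-canonical) tree-decomposition $(T_0, \mathcal V_0)$ of $G$ of adhesion at most $2$ with tight edge-separations, whose torsos are complete graphs of order at most $2$, cycles, or $3$-connected graphs. Since the edge-separations are tight of bounded order, Lemma \ref{lem: TWcut} together with Remark \ref{rem: nbaretes} shows that $E(T_0)$ has only finitely many $\Gamma$-orbits, and \cite[Lemma 3.13]{EGL23} then implies that $\Gamma_t$ acts quasi-transitively on each torso $G\torso{V_t}$.

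Next I would refine each infinite torso separately. If $G\torso{V_t}$ is $3$-connected, then as a planar locally finite quasi-transitive graph (planarity being inherited since it is a minor of $G$), Corollary \ref{cor: plan-3conn} applied with the group $\Gamma_t$ yields a $\Gamma_t$-canonical tree-decomposition $(T_t, \mathcal V_t)$ of $G\torso{V_t}$ of finite adhesion, with connected parts that are either finite or quasi-transitive one-ended, and with finitely many $\Gamma_t$-orbits on $E(T_t)$. If instead $G\torso{V_t}$ is an infinite cycle (a double ray), it has treewidth at most $2$, and Theorem \ref{thm: tw-ends}$(ii)'$ yields a $\Gamma_t$-canonical tree-decomposition with finite parts satisfying analogous properties. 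The finite torsos require no refinement.

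Finally I would combine $(T_0, \mathcal V_0)$ with these sub-decompositions using \cite[Proposition 7.2]{CTTD} (stated in the form of \cite[Proposition 3.10, Remark 3.11]{EGL23}) to produce a $\Gamma$-canonical tree-decomposition $(T', \mathcal V')$ of $G$ refining $(T_0, \mathcal V_0)$, whose adhesion sets are either adhesion sets of $(T_0, \mathcal V_0)$ or of some $(T_t, \mathcal V_t)$. In particular $(T', \mathcal V')$ has finite adhesion, and by local finiteness each fixed finite separator appears in only boundedly many separations, so $E(T')$ still has finitely many $\Gamma$-orbits. Applying Lemma \ref{lem: connected-parts} guarantees connected parts, and by Lemma \ref{lem: torso-QI} each resulting part is quasi-isometric to the corresponding torso in its sub-decomposition, hence either finite or one-ended; planarity is inherited since each part is a subgraph of $G$, and the quasi-transitivity of the $\Gamma_t$-actions on parts follows from \cite[Lemma 3.13]{EGL23}. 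The main technical obstacle is ensuring that the combined decomposition simultaneously preserves $\Gamma$-canonicity and the finiteness of $\Gamma$-orbits on its edges, which is precisely the content of the refinement proposition from \cite{CTTD}, used exactly as in the proof of Corollary \ref{cor: plan-3conn}.
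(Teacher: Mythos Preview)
Your proposal is correct and follows essentially the same strategy as the paper: start with Tutte's canonical decomposition (Theorem \ref{thm: Tutte}), apply Corollary \ref{cor: plan-3conn} to each infinite $3$-connected torso, glue via the refinement proposition from \cite{CTTD}, and then adjust with Lemma \ref{lem: connected-parts}. The one presentational difference is that the paper first passes to the supergraph $G^+$ obtained by making all Tutte adhesion pairs into edges, verifies that $G^+$ remains planar and locally finite, and reduces (via a short claim) to the case $G^+=G$; this allows the $3$-connected torsos to be treated as genuine induced subgraphs of $G$, so that the refinement directly yields connected one-ended parts in $G$ without the chain of quasi-isometries you invoke at the end. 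Your separate treatment of a potential double-ray torso is harmless but unnecessary, since in the paper's formulation the infinite Tutte torsos are automatically $3$-connected.
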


\begin{proof}

We first consider Tutte's canonical tree-decomposition $(T_0,\mathcal V_0)$ of $G$ given by Theorem \ref{thm: Tutte}. We let $G^+$ be the supergraph obtained from $G$ after adding an edge $uv$ for each pair of vertices $u,v$ belonging to a common adhesion set of $(T_0,\mathcal V_0)$. In particular for each $t\in V(T_0)$, $G\torso{V_t}=G^+[V_t]$.
As the edge-separations of $(T_0,\mathcal V_0)$ are tight, Lemma \ref{lem: TWcut} implies that for each $v\in V(G)$, there is only a finite 
bounded number of edges $tt'\in E(T_0)$ such that $v\in V_t\cap V_{t'}$. In particular, for each $v\in V(G)$, there is only a finite bounded number of $t\in V(T_0)$ such that $v\in V_t$. Thus $G^+$ is also locally finite, and as $(T_0,\mathcal V_0)$ is $\Gamma$-canonical, $\Gamma$ also acts quasi-transitively on $G^+$. We will now show that $G^+$ is planar. Note that $(T_0,\mathcal V_0)$ also corresponds to Tutte's decomposition of $G^+$, and as every torso $G\torso{V_t}$ of $(T_0,\mathcal  V_0)$ is a minor of $G$, every torso $G\torso{V_t}$ is planar. By a result attributed to Erd{\H{o}}s (see for example \cite{Thomassen80}), 
a countable graph is planar if and only it excludes $K_{3,3}$ and $K_5$ as minors. In particular it implies that a countable graph is planar if and only if all its finite subgraphs are planar, so it is enough to check that every finite subgraph of $G^+$ is planar. As the adhesion sets of $(T_0, \mathcal V_0)$ induce complete graphs in $G^+$ and have size at most $2$, note that any finite subgraph of $G^+$ is obtained after perfoming the following operation a finite number of times: taking two disjoint planar graphs $G_1, G_2$ with two edges $u_1v_1\in E(G_1)$ and $u_2v_2\in E(G_2)$, and gluing them by identifying $u_1$ with $u_2$ and $v_1$ with $v_2$. Note that such an operation does preserve planarity, hence every finite subgraph of $G^+$ must be planar
and we deduce that $G^+$ is also planar.  

\begin{claim}
 \label{clm: torsos-plan}
 If $G^+$ admits a $\Gamma$-canonical tree-decomposition $(T,\mathcal V)$ of finite adhesion whose parts are connected and finite or one-ended and such that $E(T)$ has finitely many $\Gamma$-orbits, then $G$ also admits a $\Gamma$-canonical tree-decomposition with the same properties.
\end{claim}

\begin{proof}[Proof of Claim]
 We write $\mathcal V=(V_t)_{t\in V(T)}$.
 Note that $(T,\mathcal V)$ is also a $\Gamma$-canonical tree-decomposition of $G$.
 As $E(T_0)$ has finitely many $\Gamma$-orbits, note that the set $\sg{d(u,v): uv\in E(G^+)\setminus E(G)}$ admits some maximum $k_1\in \mathbb N$. As $E(T)$ has finitely many $\Gamma$-orbits, the set $\sg{d_G(u,v): \exists t\in V(T), uv\in E(G\torso{V_t}) \setminus E(G[V_t])}$ also admits a maximum $k_2\in \mathbb N$. We set $k:=\max(k_1,k_2)$ and let $\mathcal V':=(V'_t)_{t\in V(T)}$ be defined by $V'_t:=B_k(V_t)=\sg{v\in V(G): \exists u\in V_t, d_G(u,v)\leq k}$. We claim that the proof of Lemma \ref{lem: connected-parts} still works here and implies that $(T,\mathcal V')$ is a $\Gamma$-canonical tree-decomposition of $G^+$ of finite adhesion, such that for every $t\in V(T)$, $G^+\torso{V_t}$ is quasi-isometric to $G[V'_t]$. More precisely, every mapping $\pi: V'_t\to V_t$ such that for all $v\in V'_t$, $d_G(\pi(v),v)=d_G(V_t, v)$ defines a quasi-isometry between $G^+[V_t]$ and $G[V'_t]$. In particular, for each $t\in V(T)$, $G[V'_t]$ has at most one end.
\end{proof}

Claim \ref{clm: torsos-plan} allows us to assume without loss of generality that $G^+=G$, i.e., that for each $t\in V(T_0)$ we have $G\torso{V_t}=G[V_t]$. 
For each $t\in V(T_0)$ such that $V_t$ is infinite, $G\torso{V_t}=G[V_t]$ is $3$-connected, thus we can apply Corollary \ref{cor: plan-3conn} to find a $\Gamma_{t}$-canonical tree-decomposition $(T_{t}, \mathcal V_{t})$ of $G[V_{t}]$ with finite adhesion whose parts are connected and have at most one end, and such that $E(T_{t})$ has finitely many $\Gamma_{t}$-orbits. Eventually, \cite[Proposition 7.2]{CTTD} (or \cite[Proposition 3.10, Remark 3.11]{EGL23}) implies that there exists some canonical tree-decomposition $(\widetilde T, \widetilde V)$ refining $(T_0, \mathcal V_0)$, whose parts are connected and either finite or one-ended, and such that $E(T)$ has finitely many $\Gamma$-orbits. The fact that for each $t\in V(\widetilde T)$, $\Gamma_t$ acts quasi-transitively on $G[V_t]$ follows from \cite[Lemma 3.13]{EGL23}.
\end{proof}

 \subsection*{Acknowledgements}

 I would like to thank Matthias Hamann for precious conversations about this work, and in particular for mentioning the reference \cite{EKT20} and pointing out Remark \ref{rem: Z-sums}. I am also grateful to Louis Esperet, Johannes Carmesin and Agelos Georgakopoulos for their careful reading and precious comments on a previous version of this work, which was included in my PhD manuscript. Finally, I would like thank the anonymous reviewers of the journal version for their insightful comments.

\bibliographystyle{alpha}
\bibliography{biblio}

\begin{thebibliography}{HLMR22}

\bibitem[Ant11]{Pichel09}
Yago Antol\'in.
\newblock On {C}ayley graphs of virtually free groups.
\newblock {\em Groups Complexity Cryptology}, 3(2):301--327, 2011.

\bibitem[Bab97]{Babai97}
L{\'a}szl{\'o} Babai.
\newblock The growth rate of vertex-transitive planar graphs.
\newblock In {\em ACM-SIAM Symposium on Discrete Algorithms}, 1997.

\bibitem[CDHS11]{CDHS}
Johannes Carmesin, Reinhard Diestel, Fabian Hundertmark, and Maya Stein.
\newblock Connectivity and tree structure in finite graphs.
\newblock {\em Combinatorica}, 34:11--46, 2011.

\bibitem[CHM22]{CTTD}
Johannes Carmesin, Matthias Hamann, and Babak Miraftab.
\newblock Canonical trees of tree-decompositions.
\newblock {\em Journal of Combinatorial Theory, Series B}, 152:1--26, 2022.

\bibitem[CK23]{CK23}
Johannes Carmesin and Jan Kurkofka.
\newblock Canonical decompositions of 3-connected graphs.
\newblock {\em 2023 IEEE 64th Annual Symposium on Foundations of Computer
  Science (FOCS)}, pages 1887--1920, 2023.

\bibitem[Dro06]{Droms}
Carl Droms.
\newblock Infinite-ended groups with planar {C}ayley graphs.
\newblock {\em Journal of Group Theory}, 9(4):487--496, 2006.

\bibitem[DSS98]{DSS98}
Carl Droms, Brigitte Servatius, and Herman Servatius.
\newblock The structure of locally finite two-connected graphs.
\newblock {\em Electronic Journal of Combinatorics}, 2, 01 1998.

\bibitem[Dun85]{Dunwoody1985}
Martin~J. Dunwoody.
\newblock The accessibility of finitely presented groups.
\newblock {\em Inventiones Mathematicae}, 81:449--458, 1985.

\bibitem[Dun09]{Dunwoody07}
Martin~J. Dunwoody.
\newblock Planar graphs and covers, 2009.

\bibitem[EFW12]{EFW12}
Alex Eskin, David Fisher, and Kevin Whyte.
\newblock Coarse differentiation of quasi-isometries. {I}: {Spaces} not
  quasi-isometric to {Cayley} graphs.
\newblock {\em Annals of Mathematics. Second Series}, 176(1):221--260, 2012.

\bibitem[EGLD24]{EGL23}
Louis Esperet, Ugo Giocanti, and Cl{\'e}ment Legrand-Duchesne.
\newblock The structure of quasi-transitive graphs avoiding a minor with
  applications to the domino problem.
\newblock {\em Journal of Combinatorial Theory. Series B}, 169:561--613, 2024.

\bibitem[EKT22]{EKT20}
Christian Elbracht, Jakob Kneip, and Maximilian Teegen.
\newblock Trees of tangles in infinite separation systems.
\newblock {\em Mathematical Proceedings of the Cambridge Philosophical
  Society}, 173(2):297--327, 2022.

\bibitem[Geo14]{Georgakopoulos14}
Agelos Georgakopoulos.
\newblock Characterising planar {C}ayley graphs and {C}ayley complexes in terms
  of group presentations.
\newblock {\em European Journal of Combinatorics}, 36:282--293, 2014.

\bibitem[Geo17a]{GeoCubic}
Agelos Georgakopoulos.
\newblock {\em The planar cubic {Cayley} graphs}, volume 1190 of {\em Memoirs
  of the American Mathematical Society}.
\newblock Providence, RI: American Mathematical Society (AMS), 2017.

\bibitem[Geo17b]{GeoPlanar2conn}
Agelos Georgakopoulos.
\newblock The planar cubic {Cayley} graphs of connectivity 2.
\newblock {\em European Journal of Combinatorics}, 64:152--169, 2017.

\bibitem[Geo20]{Georgakopoulos_Kleinian}
Agelos Georgakopoulos.
\newblock On planar {Cayley} graphs and {Kleinian} groups.
\newblock {\em Transactions of the American Mathematical Society},
  373(7):4649--4684, 2020.

\bibitem[GH15]{GH15}
Agelos Georgakopoulos and Matthias Hamann.
\newblock The planar {C}ayley graphs are effectively enumerable {I}:
  Consistently planar graphs.
\newblock {\em Combinatorica}, 39:993--1019, 2015.

\bibitem[GH23]{GH22}
Agelos Georgakopoulos and Matthias Hamann.
\newblock The planar {C}ayley graphs are effectively enumerable {II}.
\newblock {\em European Journal of Combinatorics}, 110:103668, 2023.

\bibitem[GH24]{GH24}
Agelos Georgakopoulos and Matthias Hamann.
\newblock A full halin grid theorem.
\newblock {\em Discrete {\&} Computational Geometry}, 2024.

\bibitem[Hal65]{HalinGrid}
Rudolf Halin.
\newblock Über die maximalzahl fremder unendlicher wege in graphen.
\newblock {\em Mathematische Nachrichten}, 30(1-2):63--85, 1965.

\bibitem[Ham15]{HamannCycle}
Matthias Hamann.
\newblock Generating the cycle space of planar graphs.
\newblock {\em The Electronic Journal of Combinatorics}, 22(2):research paper
  p2.34, 8, 2015.

\bibitem[Ham18]{HamannPlanar}
Matthias Hamann.
\newblock Planar transitive graphs.
\newblock {\em The Electronic Journal of Combinatorics}, 25(4):research paper
  p4.8, 18, 2018.

\bibitem[HLMR22]{HamannStallings22}
Matthias Hamann, Florian Lehner, Babak Miraftab, and Tim Rühmann.
\newblock A {S}tallings type theorem for quasi-transitive graphs.
\newblock {\em Journal of Combinatorial Theory, Series B}, 157:40--69, 2022.

\bibitem[Imr75]{Imrich}
Wilfried Imrich.
\newblock On {W}hitney’s theorem on the unique embeddability of 3-connected
  planar graphs.
\newblock In {\em Recent Advances in Graph Theory, Proceedings of the Symposium
  held in Prague}, volume 1974, pages 303--306, 1975.

\bibitem[KPS73]{KPS}
Abraham Karrass, Alfred Pietrowski, and Donald Solitar.
\newblock Finite and infinite cyclic extensions of free groups.
\newblock {\em Journal of the Australian Mathematical Society}, 16(4):458--466,
  1973.

\bibitem[Mac67]{MacBeath}
Angus Macbeath.
\newblock The classification of non-euclidean plane crystallographic groups.
\newblock {\em Canadian Journal of Mathematics}, 19:1192 -- 1205, 1967.

\bibitem[Mac24]{Mac24Planar}
Joseph MacManus.
\newblock A note on quasi-transitive graphs quasi-isometric to planar
  ({Cayley}) graphs.
\newblock Preprint, {arXiv}:2407.13375 [math.{CO}] (2024), 2024.

\bibitem[Mas96]{Maschke}
Heinrich Maschke.
\newblock The representation of finite groups, especially of the rotation
  groups of the regular bodies of three-and four-dimensional space, by cayley's
  color diagrams.
\newblock {\em American Journal of Mathematics}, 18(2):156--194, 1896.

\bibitem[Moh06]{Mohar06}
Bojan Mohar.
\newblock Tree amalgamation of graphs and tessellations of the {Cantor} sphere.
\newblock {\em Journal of Combinatorial Theory. Series B}, 96(5):740--753,
  2006.

\bibitem[MS83]{MS83}
David~E. Muller and Paul~E. Schupp.
\newblock Groups, the theory of ends, and context-free languages.
\newblock {\em Journal of Computer and System Sciences}, 26(3):295--310, 1983.

\bibitem[MS22]{MS22}
Babak Miraftab and Konstantinos Stavropoulos.
\newblock Splitting groups with cubic {Cayley} graphs of connectivity two.
\newblock {\em Algebraic Combinatorics}, 4(6):971--987, 2022.

\bibitem[Ren03]{Renault}
David Renault.
\newblock Enumerating {P}lanar {L}ocally {F}inite {C}ayley {G}raphs.
\newblock {\em Geometriae Dedicata}, 112:25--49, 2003.

\bibitem[Sta68]{Sta68}
John~R. Stallings.
\newblock On torsion-free groups with infinitely many ends.
\newblock {\em Annals of Mathematics. Second Series}, 88:312--334, 1968.

\bibitem[Tho80]{Thomassen80}
Carsten Thomassen.
\newblock Planarity and duality of finite and infinite graphs.
\newblock {\em Journal of Combinatorial Theory, Series B}, 29(2):244--271,
  1980.

\bibitem[Tut84]{Tutte}
William~T. Tutte.
\newblock {\em Graph Theory}.
\newblock Encyclopedia of Mathematics and its Applications. Cambridge
  University Press, 1984.

\bibitem[TW93]{TW}
Carsten Thomassen and Wolfgang Woess.
\newblock Vertex-transitive graphs and accessibility.
\newblock {\em Journal of Combinatorial Theory, Series B}, 58(2):248--268,
  1993.

\bibitem[Whi33]{Whitney}
Hassler Whitney.
\newblock 2-isomorphic graphs.
\newblock {\em American Journal of Mathematics}, 55(1):245--254, 1933.

\bibitem[Wil66]{Wilkie}
H.~C. Wilkie.
\newblock On non-euclidean crystallographic groups.
\newblock {\em Mathematische Zeitschrift}, 91:87--102, 1966.

\bibitem[Woe89]{Woess89}
Wolfgang Woess.
\newblock Graphs and groups with tree-like properties.
\newblock {\em Journal of Combinatorial Theory, Series B}, 47(3):361--371,
  1989.

\bibitem[Woe91]{Woe91}
Wolfgang Woess.
\newblock Topological groups and infinite graphs.
\newblock {\em Discrete Mathematics}, 95(1-3):373--384, 1991.

\bibitem[ZVC80]{Zieschang80}
Heiner Zieschang, Elmar Vogt, and Hans-Dieter Coldewey.
\newblock {\em Surfaces and planar discontinuous groups. {Revised} and expanded
  transl. from the {German} by {J}. {Stillwell}}, volume 835 of {\em Lecture
  Notes in Mathematics}.
\newblock Springer, Cham, 1980.

\end{thebibliography}

\end{document}